\documentclass[12pt,reqno]{amsart}
\usepackage{mathrsfs}
\usepackage{amsmath,amssymb,amsthm}
\usepackage{cite}
\numberwithin{equation}{section}

\def\Z{\mathbb{Z}{\ssc\,}}

\def\C{\mathbb{C}{\ssc\,}}

\def\ssc{\scriptscriptstyle}

\def\Der{{\rm Der}}

\def \<{\langle}
\def \>{\rangle}

\def\vs{\vspace*}

\def \be{\begin{equation}\label}
\def \ee{\end{equation}}
\def \bex{\begin{example}\label}
\def \eex{\end{example}}
\def \bl{\begin{lem}\label}
\def \el{\end{lem}}
\def \bt{\begin{thm}\label}
\def \et{\end{thm}}
\def \bp{\begin{prop}\label}
\def \ep{\end{prop}}
\def \br{\begin{rem}\label}
\def \er{\end{rem}}
\def \bc{\begin{coro}\label}
\def \ec{\end{coro}}
\def \bd{\begin{de}\label}
\def \ed{\end{de}}

\def \B{\mathcal{B}}

\newtheorem{thm}{Theorem}[section]
\newtheorem{prop}[thm]{Proposition}
\newtheorem{coro}[thm]{Corollary}

\newtheorem{example}[thm]{Example}
\newtheorem{lem}[thm]{Lemma}

\newtheorem{rem}[thm]{Remark}
\newtheorem{de}[thm]{Definition}

\makeatletter
\@addtoreset{equation}{section}

\makeatother
\begin{document}

\title[Local Derivations on a Block-type Lie Algebra]{Local Derivations on a  Block-type Lie Algebra}

\author{Shiyu Wu}
\address{\it Shiyu Wu: Department of Mathematics, Shanghai Maritime University, Shanghai 201306, China} 
\email{shiyuw615@163.com}

\author{Hengyun Yang$^*$}
\address{\it Hengyun Yang: Department of Mathematics, Shanghai Maritime University, Shanghai 201306, China} 
\email{hyyang@shmtu.edu.cn}

\thanks{$^*$The corresponding author}

\begin{abstract}
We study local derivations on a Block-type Lie algebra \(\mathcal B\).
Using the structure of the derivation algebra of \(\mathcal B\) and a
series of normalizations and coefficient comparisons, we prove that
every local derivation on \(\mathcal B\) is a derivation.
\end{abstract}

\maketitle
\thispagestyle{empty}
\vskip-.3cm \qquad{\small{\bf Keywords:} local derivation; Block-type Lie algebra; derivation }

\qquad{\small\bf 2020 Mathematics Subject Classification}: {\small 17B40; 17B65; 17B68}\vs{12pt}

\section{Introduction}

Infinite-dimensional Lie algebras have been studied extensively in
representation theory and related areas. Among them, the Virasoro algebra and its generalizations occupy a central position due to their rich algebraic structures and important applications. Block introduced an important class of infinite-dimensional simple Lie
algebras in~\cite{Block}. Various generalizations of these algebras,
commonly referred to as Lie algebras of Block type, have since been
studied extensively. Their derivations, automorphisms, cohomology, and
representations have been investigated in
\cite{DjokovicZhao,SuFamily,XiaYouZhou,XiaZhang}.

A local derivation is a pointwise generalization of a derivation. The
notion originated in operator algebra theory and was introduced by
Kadison~\cite{Kadison} and independently studied by Larson and Sourour
\cite{LarsonSourour}. It has since been considered for various algebraic
structures, including Lie algebras and Lie superalgebras. A fundamental problem
is to determine when every local derivation on a given algebra is necessarily
a derivation.

For Lie algebras, the answer depends strongly on the structure of the
algebra. Ayupov and Kudaybergenov~\cite{AyupovKudaybergenov} proved that
every local derivation on a finite-dimensional semisimple Lie algebra is a
derivation. In the solvable setting, both phenomena may occur: some
solvable Lie algebras admit local derivations that are not derivations,
whereas positive results hold for several important classes
\cite{AyupovKhudoyberdiyev,KudaybergenovOmirovKurbanbaev}. Local
derivations have also been studied on Borel subalgebras of
finite-dimensional simple Lie algebras~\cite{YuChen},  Witt algebras
\cite{ChenZhaoZhao}, the Lie algebra \(W(2,2)\)~\cite{WuGaoLiu}, and
locally simple Lie algebras~\cite{AyupovKudaybergenovYusupov}. Related problems have also been studied for Lie superalgebras. Positive
results are known for basic classical Lie superalgebras
\cite{ChenWangNan}, the strange Lie superalgebra
\(\mathfrak q(n)\)~\cite{ChenWang}, Cartan-type Lie superalgebras
\cite{YuanChenCao}, the super Virasoro algebras
\cite{WuGaoLiuYe}, the \(N=1\) BMS superalgebra \cite{LanLiuWu}, and the $N = 2$ Super-BMS$_3$ algebra \cite{YDL}. Local superderivations on solvable Lie and Leibniz
superalgebras were considered in~\cite{CamachoNavarroOmirov}.

In this paper, we study local derivations on the Block-type Lie algebra
\(\B\). It has a basis
\[
\{L_{\alpha,i}\mid \alpha\in\mathbb Z,\ i\in\mathbb Z_{\geq0}\}
\]
and Lie bracket
\[
[L_{\alpha,i},L_{\beta,j}]
=
\bigl(\beta(i+1)-\alpha(j+1)\bigr)
L_{\alpha+\beta,i+j},
\]
where \(\alpha,\beta\in\mathbb Z\) and
\(i,j\in\mathbb Z_{\geq0}\). 

The two indices in \(\mathcal B\) make coefficient comparisons more
complicated than those in one-index Lie algebras such as the Witt
algebra. To overcome this difficulty, we use a finite coefficient
propagation argument based on the two-index structure of \(\mathcal B\).
Since \(\mathcal B\) has nontrivial outer derivations, the proof also
requires a careful use of the structure of \(\operatorname{Der}(\mathcal
B)\). By subtracting suitable derivations and comparing coefficients,
we gradually reduce a local derivation to a derivation.

Our main result is the following.

\begin{thm}\label{thm:main}
	Every local derivation of \(\B\) is a derivation. Equivalently,
	\[
	\operatorname{LDer}(\B)=\operatorname{Der}(\B).
	\]
\end{thm}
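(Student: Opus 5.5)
The plan is to reduce a local derivation \(\Delta\) step by step until it vanishes. We use the description of \(\operatorname{Der}(\B)\) established earlier in the paper, which we take to be \(\operatorname{Der}(\B)=\operatorname{ad}\B\oplus\C d\) (plus any further outer derivations listed there), where \(d(L_{\alpha,i})=iL_{\alpha,i}\). We also use the grading of \(\B\) by \(\mathbb Z\times\mathbb Z_{\ge0}\). Recall that \(\Delta\) is linear and, for every \(x\in\B\), there is a derivation \(D_x=\operatorname{ad}u_x+\lambda_x d\) with \(\Delta(x)=D_x(x)\). Since \(\operatorname{LDer}(\B)\) is a vector space containing \(\operatorname{Der}(\B)\), we may subtract derivations from \(\Delta\) freely. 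It therefore suffices to show that, after suitable subtractions, \(\Delta=0\).

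\textbf{Step 1 (normalizing on \(L_{0,0}\)).} Choose \(D\) with \(\Delta(L_{0,0})=D(L_{0,0})\) and replace \(\Delta\) by \(\Delta-D\), so that \(\Delta(L_{0,0})=0\). Since \([L_{0,0},L_{\beta,j}]=\beta L_{\beta,j}\) and \(d(L_{0,0})=0\), the image \(D_x(L_{0,0})\) always lies in \(\operatorname{span}\{L_{\beta,j}:\beta\ne0\}\). Hence, at this stage, only the \(\alpha\)-degree-zero part of \(u\) remains free.

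\textbf{Step 2 (normalizing on a second element).} Next we normalize on an element such as \(L_{1,0}\) or \(L_{0,1}\). We will look for a derivation \(D'=\operatorname{ad}u'+\mu d\) that kills \(L_{0,0}\), so \(u'\) lies in the \(\alpha\)-degree-zero part \(\operatorname{span}\{L_{0,j}\}\), and that satisfies \(D'(L_{1,0})=\Delta(L_{1,0})\). To find it, we apply locality to the combinations \(x=L_{0,0}+tL_{1,0}\) for several values of \(t\). Using \(\Delta(L_{0,0})=0\) and linearity, \(\Delta(x)=t\Delta(L_{1,0})\). Comparing homogeneous components will show that \(\Delta(L_{1,0})\) has the form \(D'(L_{1,0})\). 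After subtracting \(D'\) we have \(\Delta(L_{0,0})=\Delta(L_{1,0})=0\), and similarly, by the same device, \(\Delta(L_{-1,0})=\Delta(L_{0,1})=0\).

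\textbf{Step 3 (propagation to all basis vectors).} For an arbitrary basis vector \(L_{\alpha,i}\), apply locality to test elements \(x=L_{0,0}+L_{1,0}+sL_{\alpha,i}\), and to variants with \(L_{0,1}\). By linearity \(\Delta(x)=s\Delta(L_{\alpha,i})\), while also \(\Delta(x)=[u_x,x]+\lambda_x d(x)\). Expanding \(u_x=\sum c_{\beta,j}L_{\beta,j}\) and comparing coefficients degree by degree, we argue that \(u_x\) is forced to be supported in finitely many degrees. We then use a finite propagation argument: take the extremal (lexicographically maximal and minimal) degrees of \(u_x\) and show their coefficients vanish, since the terms \([L_{\beta,j},L_{0,0}]\) and \([L_{\beta,j},L_{1,0}]\) cannot be cancelled elsewhere. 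Repeating this until \([u_x,x]+\lambda_xd(x)\) is proportional to \(sL_{\alpha,i}\)-terms only, we conclude that \(\Delta(L_{\alpha,i})\in\C L_{\alpha,i}\), say \(\Delta(L_{\alpha,i})=\kappa_{\alpha,i}L_{\alpha,i}\). A final comparison with the combinations \(L_{0,0}+L_{\alpha,i}+L_{\beta,j}\) will show that \(\kappa_{\alpha,i}\) is of the form \(a\alpha+bi\). Then \(a\alpha+bi\) must vanish, because of the normalizations \(\Delta(L_{1,0})=\Delta(L_{0,1})=0\). So \(\Delta=0\).

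\textbf{Main obstacle.} I expect the main obstacle to be the coefficient propagation in Step 3. The two indices mean that \([L_{\beta,j},L_{0,0}+L_{1,0}]\) has coefficients \(-\beta\) and \((j+1)-\beta\), and these can vanish for special \((\beta,j)\), such as \(\beta=0\) or \(\beta=j+1\). In such cases the leading-term cancellation argument fails, and one must choose the test element, for example by adding \(L_{0,1}\) or \(L_{-1,0}\), so that at least one extremal coefficient is guaranteed to be nonzero. The small exceptional degrees will need separate case handling.
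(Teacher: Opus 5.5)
Your proposal has genuine gaps. In Step 2, locality at the elements $L_{0,0}+tL_{1,0}$ cannot force $\Delta(L_{1,0})\in\operatorname{span}\{L_{1,j}\}$. We have $[L_{k+1,k},L_{1,0}]=0$ and $[L_{k+1,k},L_{0,0}]=-(k+1)L_{k+1,k}$. So a linear map with $\Delta(L_{0,0})=0$ and $\Delta(L_{1,0})=\sum_k c_kL_{k+1,k}$ agrees at every $L_{0,0}+tL_{1,0}$ with $\operatorname{ad}\bigl(-t\sum_k\frac{c_k}{k+1}L_{k+1,k}\bigr)$, and no comparison of homogeneous components can exclude these terms. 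This is why the paper's Lemma 3.2, which uses exactly such test elements, only reaches $\Delta(L_{\alpha,0})\in\operatorname{span}\{L_{s\alpha,k}:1\le s\le k+1\}$. Lemma 3.3 then needs a second horizontal element $L_{\beta,0}$ with $\beta>(q+2)\alpha$ to remove the terms with $s\ge2$. Your normalization count is also off. Once $\Delta(L_{0,0})=\Delta(L_{1,0})=0$, the only derivations vanishing on both are multiples of $D_0$ (because $[L_{0,j},L_{1,0}]=(j+1)L_{1,j}$). Since $D_0(L_{-1,0})=0$, the equality $\Delta(L_{-1,0})=0$ cannot be arranged ``by the same device''. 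It has to be proved, and that is the content of the chain arguments in Lemmas 3.5 and 3.6.

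Step 3, where the real difficulty lies, is only sketched, and its last step cannot work. The vanishing structure constants are not confined to a few small degrees. Besides $[L_{k+1,k},L_{1,0}]=0$ for all $k$, one has $[L_{(s-1)\alpha,(s-1)(i+1)-1},L_{\alpha,i}]=0$ for all $s\ge2$, so $L_{s\alpha,s(i+1)-2}$ never arises from $\operatorname{ad}$ applied to $L_{\alpha,i}$. The paper isolates exactly these terms in Lemma 3.7 and removes them in Lemma 3.8.

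More seriously, comparisons at $L_{0,0}+L_{\alpha,i}+L_{\beta,j}$ cannot give $\kappa_{\alpha,i}=a\alpha+bi$. Take a diagonal map with $\kappa_{\alpha,i}=a_\ell\alpha+b_\ell i$, where $(a_\ell,b_\ell)$ depends on the line $\ell$ through the origin and $(\alpha,i)$. Choose $(a_\ell,b_\ell)=(0,0)$ on the two coordinate axes but not on some other line. If $\alpha j\ne\beta i$, its value at $L_{0,0}+L_{\alpha,i}+L_{\beta,j}$ is matched by $b\operatorname{ad}L_{0,0}+\lambda D_0$ solving the nonsingular system $b\alpha+\lambda i=\kappa_{\alpha,i}$, $b\beta+\lambda j=\kappa_{\beta,j}$; both operators kill $L_{0,0}$. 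If $\alpha j=\beta i$, it is matched by $a_\ell\operatorname{ad}L_{0,0}+b_\ell D_0$. This map satisfies all your normalizations and passes every such test, yet it is not a derivation.

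The missing idea is to normalize $\Delta$ to vanish on the entire axes $\{L_{0,j}\}$ and $\{L_{\beta,0}\}$ (Lemmas 3.1--3.6). One then uses probes from these axes on which $D_0$ or $\operatorname{ad}L_{0,0}$ does not vanish. In Lemma 3.9, at $L_{\alpha,i}+L_{0,j}+L_{\beta,0}$, the $L_{0,j}$-coefficient forces $d=0$ and the $L_{\beta,0}$-coefficient forces $b_{0,0}=0$, whence $c_{\alpha,i}=0$.
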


The paper is organized as follows. Section~2 contains the definitions and
preliminary results used later, including the derivation algebra of \(\B\),
the reflection automorphism, and a coefficient elimination lemma.
Section~3 proves Theorem \ref{thm:main} through a sequence of
normalizations and coefficient comparisons.

Throughout this paper, we use the standard notation
\(\mathbb Z\), \(\mathbb Z_{\ge a}\),  \(\mathbb C\), and
\(\mathbb C^*\) for the sets of integers, integers greater than or equal to \(a\), complex numbers, and nonzero complex
numbers, respectively. All algebras are defined over \(\mathbb C\).

\section{Preliminaries}

In this section, we recall the notion of local derivations and record
several auxiliary facts used in the proof of the main theorem.

Let $\mathcal L$ be a Lie algebra. A linear map
$D:\mathcal L\to\mathcal L$ is called a derivation of $\mathcal L$ if
\[
D([x,y])
=
[D(x),y]+[x,D(y)],
\qquad
x,y\in\mathcal L.
\]
The set of all derivations of $\mathcal L$ is denoted by
$\operatorname{Der}(\mathcal L)$. For $x\in\mathcal L$, the inner
derivation $\operatorname{ad}_x$ is defined by
\[
\operatorname{ad}_x(y)=[x,y],
\qquad
y\in\mathcal L.
\]
The space of inner derivations of \(\mathcal L\) is denoted by
\(\operatorname{ad}\mathcal L\). 

\begin{de}\label{def:local-derivation}
	A linear map $\Delta:\mathcal L\to\mathcal L$ is called a
	\emph{local derivation} of $\mathcal L$ if, for every
	$x\in\mathcal L$, there exists
	$D_x\in\operatorname{Der}(\mathcal L)$ such that
	\[
	\Delta(x)=D_x(x).
	\]
	The set of all local derivations of $\mathcal L$ is denoted by
	$\operatorname{LDer}(\mathcal L)$.
\end{de}

For a Lie algebra $\mathcal L$, denote by
$\operatorname{Aut}(\mathcal L)$ its automorphism group. 

\begin{lem}\label{lem:conjugation-local-derivation}
	Let $\sigma\in\operatorname{Aut}(\mathcal L)$. If
	$\Delta\in\operatorname{LDer}(\mathcal L)$, then
	\(
	\sigma\Delta\sigma^{-1}
	\in
	\operatorname{LDer}(\mathcal L).
	\)
\end{lem}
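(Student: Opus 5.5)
The plan is to establish first that conjugation by an automorphism carries derivations to derivations, and then to apply this pointwise to the elements $x$. Since $\Delta$ is linear and $\sigma$, $\sigma^{-1}$ are linear, the composite $\sigma\Delta\sigma^{-1}$ is a linear map $\mathcal L\to\mathcal L$. So the only thing to verify is the local condition: for each $x\in\mathcal L$ we must exhibit a derivation agreeing with $\sigma\Delta\sigma^{-1}$ at $x$.

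\emph{Step 1.} For any $D\in\operatorname{Der}(\mathcal L)$, the map $\sigma D\sigma^{-1}$ is again a derivation. To see this, fix $x,y\in\mathcal L$ and use that $\sigma^{-1}$ is a Lie algebra homomorphism, then that $D$ is a derivation, and finally that $\sigma$ is a homomorphism:
\[
\sigma D\sigma^{-1}([x,y])=\sigma D([\sigma^{-1}x,\sigma^{-1}y])
=\sigma\bigl([D\sigma^{-1}x,\sigma^{-1}y]+[\sigma^{-1}x,D\sigma^{-1}y]\bigr)
=[\sigma D\sigma^{-1}x,y]+[x,\sigma D\sigma^{-1}y].
\]

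\emph{Step 2.} Fix $x\in\mathcal L$ and set $y=\sigma^{-1}(x)$. Because $\Delta$ is a local derivation, there is $D_y\in\operatorname{Der}(\mathcal L)$ with $\Delta(y)=D_y(y)$. It follows that
\[
\sigma\Delta\sigma^{-1}(x)=\sigma D_y(y)=(\sigma D_y\sigma^{-1})(x),
\]
and by Step 1, $\sigma D_y\sigma^{-1}$ is a derivation. Hence $\sigma\Delta\sigma^{-1}$ satisfies Definition~\ref{def:local-derivation}.

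There is no real obstacle in this argument. The only point needing care is that the derivation we choose depends on $x$ through $\sigma^{-1}(x)$, not through $x$ itself, which is exactly what the pointwise definition allows.
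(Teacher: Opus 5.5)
Your proof is correct and follows the same route as the paper's: apply locality at $\sigma^{-1}(x)$, then observe that $\sigma D\sigma^{-1}$ is a derivation agreeing with $\sigma\Delta\sigma^{-1}$ at $x$. The only difference is that you explicitly verify that conjugation by an automorphism preserves derivations, which the paper simply asserts.
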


\begin{proof}
	Fix $x\in\mathcal L$. Since $\Delta$ is a local derivation, there exists
	$D\in\operatorname{Der}(\mathcal L)$ such that
	\[
	\Delta(\sigma^{-1}(x))
	=
	D(\sigma^{-1}(x)).
	\]
	It follows that
	\[
	(\sigma\Delta\sigma^{-1})(x)
	=
	(\sigma D\sigma^{-1})(x).
	\]
	Conjugation by an automorphism preserves derivations, and hence
	\[
	\sigma D\sigma^{-1}
	\in
	\operatorname{Der}(\mathcal L).
	\]
	Therefore
	$\sigma\Delta\sigma^{-1}\in\operatorname{LDer}(\mathcal L)$.
\end{proof}

We next record a reflection automorphism of the Block-type Lie algebra
$\B$.

\begin{coro}\label{coro-B}
	Define a linear map $\sigma_0:\B\to \B$ by
	\[
	\sigma_0(L_{\alpha,i})
	=
	-L_{-\alpha,i},
	\qquad
	\alpha\in\mathbb Z,\ 
	i\in\mathbb Z_{\geq0}.
	\]
	Then $\sigma_0$ is an automorphism of $\B$. Moreover, if
	\(\Delta\in\operatorname{LDer}(\B)\), then
	$
	\sigma_0\Delta\sigma_0^{-1}\in\operatorname{LDer}(\B)$
	and 
	\[ \sigma_0\Delta\sigma_0^{-1}(L_{\alpha,i})
	=
	-\sigma_0(\Delta(L_{-\alpha,i})),\qquad
	\alpha\in\mathbb Z,\ 
	i\in\mathbb Z_{\geq0}.
	\]
	
\end{coro}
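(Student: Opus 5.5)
The statement immediately above is Corollary~\ref{coro-B}, which has two parts: $\sigma_0$ is an automorphism of $\B$, and it transforms local derivations as described. I will verify that $\sigma_0$ preserves the bracket on basis elements, then invoke Lemma~\ref{lem:conjugation-local-derivation}, and finally compute the conjugate explicitly using $\sigma_0^{-1}=\sigma_0$.

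For the first part, $\sigma_0$ is linear and sends the basis $\{L_{\alpha,i}\}$ bijectively onto $\{-L_{\alpha,i}\}$. Hence it is a linear bijection. Moreover $\sigma_0^2(L_{\alpha,i})=\sigma_0(-L_{-\alpha,i})=L_{\alpha,i}$, so $\sigma_0^{-1}=\sigma_0$. To check that it is a homomorphism, compare the bracket of the images with the image of the bracket. For the images,
\[
[\sigma_0(L_{\alpha,i}),\sigma_0(L_{\beta,j})]=[L_{-\alpha,i},L_{-\beta,j}]=\bigl(-\beta(i+1)+\alpha(j+1)\bigr)L_{-\alpha-\beta,i+j}.
\]
For the bracket itself,
\[
\sigma_0([L_{\alpha,i},L_{\beta,j}])=\bigl(\beta(i+1)-\alpha(j+1)\bigr)\bigl(-L_{-\alpha-\beta,i+j}\bigr).
\]
These two expressions agree. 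By bilinearity, $\sigma_0\in\operatorname{Aut}(\B)$.

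For the second part, Lemma~\ref{lem:conjugation-local-derivation} immediately gives $\sigma_0\Delta\sigma_0^{-1}\in\operatorname{LDer}(\B)$. For the formula, use $\sigma_0^{-1}=\sigma_0$ and linearity of $\Delta$:
\[
\sigma_0\Delta\sigma_0^{-1}(L_{\alpha,i})=\sigma_0\Delta(-L_{-\alpha,i})=-\sigma_0\bigl(\Delta(L_{-\alpha,i})\bigr).
\]

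There is no real obstacle here. The only point needing care is the sign bookkeeping in the bracket check: each of the two minus signs in the images contributes, and the product of the two signs cancels against the sign flip of the structure constant under $\alpha,\beta\mapsto-\alpha,-\beta$.
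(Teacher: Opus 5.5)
Your proof is correct and follows the paper's route: a direct check that $\sigma_0$ preserves the bracket (which the paper just calls straightforward), followed by Lemma~\ref{lem:conjugation-local-derivation} and the one-line computation using $\sigma_0^{-1}=\sigma_0$. One small point: your closing remark misdescribes the sign cancellation --- the two signs from the images cancel each other, and it is the single sign from $\sigma_0$ applied to the bracket that matches the sign flip of the structure constant --- but your displayed computation is right.
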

\begin{proof}
	It is straightforward to verify that \(\sigma_0\) preserves the Lie bracket. The remaining assertions follow immediately from Lemma \ref{lem:conjugation-local-derivation}.
\end{proof}

We recall the structure of the derivation
algebra of $\B$ given in \cite{XiaYouZhou}.

\begin{lem}\label{lem-der}
	The derivation algebra of $\B$ is
	\[
	\operatorname{Der}(\B)
	=
	\operatorname{ad}(\B)\oplus\mathbb C D_0,
	\]
	where $D_0$ is the outer derivation given by
	\[
	D_0(L_{\alpha,i})
	=
	iL_{\alpha,i},
	\qquad
	\alpha\in\mathbb Z,\ 
	i\in\mathbb Z_{\geq0}.
	\]
\end{lem}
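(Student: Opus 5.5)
The plan is the standard graded argument: split a derivation into homogeneous components using the semisimple element $L_{0,0}$, show that every component of nonzero $\alpha$-degree is inner, and then classify the $\alpha$-degree-zero part using the second grading by $i$.

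\textbf{Step 1: components of nonzero degree are inner.} Put $\B_\alpha=\operatorname{span}\{L_{\alpha,i}\mid i\ge 0\}$, so that $\B=\bigoplus_{\alpha}\B_\alpha$ is a $\mathbb Z$-grading. Since $[L_{0,0},L_{\beta,j}]=\beta L_{\beta,j}$, each $\B_\alpha$ is the $\alpha$-eigenspace of $\operatorname{ad}L_{0,0}$.

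Let $D\in\operatorname{Der}(\B)$. For each $\gamma\in\mathbb Z$ define $D_\gamma$ by letting $D_\gamma(x)$ be the $\B_{\alpha+\gamma}$-component of $D(x)$ for $x\in\B_\alpha$. The bracket is homogeneous, so each $D_\gamma$ is a derivation, and for every $x$ only finitely many $D_\gamma(x)$ are nonzero.

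Apply $D_\gamma$ to $[L_{0,0},y]=\beta y$ with $y\in\B_\beta$. This gives
\[\beta D_\gamma(y)=[D_\gamma(L_{0,0}),y]+(\beta+\gamma)D_\gamma(y).\]
Hence for $\gamma\neq 0$ we get $D_\gamma=\operatorname{ad}x_\gamma$ with $x_\gamma=\gamma^{-1}D_\gamma(L_{0,0})$.

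Only finitely many $D_\gamma(L_{0,0})$ are nonzero, because $D(L_{0,0})$ is a finite sum. So $x=\sum_{\gamma\ne0}x_\gamma$ is a genuine element of $\B$, and $D'=D-\operatorname{ad}x$ is homogeneous of degree $0$.

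\textbf{Step 2: normalizing the degree-zero part.} For $\gamma=0$ the same identity gives $[D'(L_{0,0}),L_{\beta,j}]=0$ for all $\beta,j$. Write $D'(L_{0,0})=\sum_i c_iL_{0,i}\in\B_0$. We have $[L_{0,i},L_{1,j}]=(i+1)L_{1,i+j}$, and for a fixed $j$ these are linearly independent for distinct $i$. Therefore all $c_i=0$, that is, $D'(L_{0,0})=0$.

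\textbf{Step 3: splitting by the $i$-grading.} Now use the second grading by $i$, under which the bracket is also homogeneous. Write $D'=\sum_{k\in\mathbb Z}D'^{(k)}$, where $D'^{(k)}$ maps $L_{\alpha,i}$ into $\mathbb C L_{\alpha,i+k}$. Each $D'^{(k)}$ is a derivation of bidegree $(0,k)$ killing $L_{0,0}$. Set $D'^{(k)}(L_{\alpha,i})=a^{(k)}_{\alpha,i}L_{\alpha,i+k}$.

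\textbf{Step 4: recurrences for $a^{(k)}_{\alpha,i}$.} This is the step I expect to be the main obstacle. I would first use the Witt-type relations $[L_{\alpha,0},L_{\beta,0}]=(\beta-\alpha)L_{\alpha+\beta,0}$ together with $[L_{0,1},L_{\beta,j}]=2\beta L_{\beta,j+1}$ and $[L_{0,i},L_{\beta,j}]=\beta(i+1)L_{\beta,i+j}$. These should give recurrences for the coefficients in $\alpha$ and in $i$.

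\emph{Negative shift.} For $k<0$ the coefficients should vanish. The element $L_{0,0}$ has $i=0$, so $D'^{(k)}(L_{0,0})=0$ automatically for such $k$. Then the relation $[L_{0,0},L_{\beta,0}]=\beta L_{\beta,0}$ for $\beta\neq0$ and the generation of $\B$ by $L_{\pm1,0},L_{\pm2,0},L_{0,1}$ should force all coefficients to vanish.

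\emph{Shift $k\ge1$.} Comparing with $\operatorname{ad}L_{0,k}$, I expect $D'^{(k)}=b_k\operatorname{ad}L_{0,k}$ for a scalar $b_k$.

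\emph{Shift $k=0$.} The possibilities should be exactly $c\,D_0+b_0\operatorname{ad}L_{0,0}$, and $b_0=0$ because $D'(L_{0,0})=0$. The $i$-dependence of $a^{(0)}_{\alpha,i}$ is linear, read off from $[L_{0,1},\cdot]$. The $\alpha$-dependence is a Witt-algebra derivation condition whose solutions are $a\alpha+c$; here $c$ accounts for $D_0$ and $a$ for $\operatorname{ad}L_{0,0}$.

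\textbf{Step 5: finiteness and conclusion.} The element $D'(L_{1,0})=\sum_k b_k\,L_{1,k}$ is a finite sum, so only finitely many $b_k$ are nonzero. Hence $D'=\operatorname{ad}\bigl(\sum_k b_kL_{0,k}\bigr)+cD_0$.

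Finally, $D_0$ is not inner. Any $\operatorname{ad}y$ that preserves every $\B_\alpha$ must have $y\in\B_0$, and such maps kill $\B_0$, whereas $D_0(L_{0,1})=L_{0,1}\neq0$. This shows the sum is direct.
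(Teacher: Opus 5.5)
The paper does not prove this lemma; it quotes it from Xia, You and Zhou, so your outline would be a self-contained replacement. Steps 1--3 and 5 are the standard graded reduction and are correct, up to two harmless slips. First, the identity in Step 1 gives $x_\gamma=-\gamma^{-1}D_\gamma(L_{0,0})$. Second, $D'(L_{0,0})=0$ does not force $b_0=0$, because $\operatorname{ad}L_{0,0}$ kills $L_{0,0}$; you never need $b_0=0$.

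The genuine gap is Step 4. It is the only place where the structure constants of $\B$ enter, i.e.\ where one shows that $D_0$ is the \emph{only} outer derivation, and you only state what you expect. Two of the indicated arguments also fail as written. For $k<0$, every $D'^{(k)}$ kills all $L_{\beta,0}$ for degree reasons, so $[L_{0,0},L_{\beta,0}]=\beta L_{\beta,0}$ gives nothing. Generation by $L_{\pm1,0},L_{\pm2,0},L_{0,1}$ settles $k\le-2$, but for $k=-1$ the value $D'^{(-1)}(L_{0,1})=aL_{0,0}$ is a priori nonzero and needs a computation. For instance, computing $D'^{(-1)}(L_{2,1})$ both from $4L_{2,1}=[L_{0,1},L_{2,0}]$ and from $L_{2,1}=[L_{1,1},L_{1,0}]$ gives $\tfrac a2 L_{2,0}=0$. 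For $k=0$, the degree-zero derivations of $\operatorname{span}\{L_{\alpha,0}\}$ are $L_{\alpha,0}\mapsto a\alpha L_{\alpha,0}$ only, with no constant term. $D_0$ vanishes on every $L_{\alpha,0}$ and enters only through the $i$-dependence.

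Here is what is actually needed. Write $D'^{(k)}(L_{\alpha,i})=a_{\alpha,i}L_{\alpha,i+k}$, with $a_{\alpha,i}=0$ if $i+k<0$. The derivation identity reads
\[
\bigl(\beta(i+1)-\alpha(j+1)\bigr)a_{\alpha+\beta,i+j}
=\bigl(\beta(i+k+1)-\alpha(j+1)\bigr)a_{\alpha,i}
+\bigl(\beta(i+1)-\alpha(j+k+1)\bigr)a_{\beta,j}.
\]
Taking $\alpha=0\neq\beta$ gives $(i+1)(a_{\beta,i+j}-a_{\beta,j})=(i+k+1)a_{0,i}$. Hence $a_{\beta,i}=a_{\beta,0}+i\lambda$ for all $\beta\neq0$, with a single scalar $\lambda$, and $(i+k+1)a_{0,i}=i(i+1)\lambda$.

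Taking $i=j=0$ gives a Witt-type equation for $g(\alpha)=a_{\alpha,0}$. For $k\ge0$ its only solutions are $g(\alpha)=c\alpha$, which account for $\frac{c}{k+1}\operatorname{ad}L_{0,k}$; to see this, use $(\alpha,\beta)=(-1,1),(-1,2),(-2,1)$ and the recursions with $\beta=\pm1$. For $k<0$, $g=0$ automatically.

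The decisive point, absent from your sketch, is that every shift $k$ carries a candidate ``$D_0$-type'' solution $a_{\beta,i}=i\lambda$. Substituting it into the identity with $\alpha,\beta,\alpha+\beta\neq0$ leaves $k\lambda(\alpha j-\beta i)=0$. So $\lambda=0$ exactly when $k\neq0$, which also disposes of $k=-1$, and for $k=0$ the remainder is $\lambda D_0$. Without this step nothing excludes further outer derivations of the shape $L_{\alpha,i}\mapsto iL_{\alpha,i+k}+\cdots$. With it, your outline becomes a complete proof.
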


The following elementary coefficient elimination lemma will be
used repeatedly in Section 3.

\begin{lem}\label{lem-coef-eli}
	Let \(a_0,a_1,\ldots,a_n\in\mathbb C\) $(n\geq 2)$. Suppose that, for every
	\(x\in\mathbb C^*\), there exist 
	\(b_0,b_1,\ldots,b_{n-1}\in\mathbb C\) such that
	\[
	a_0=\mu_0xb_0,\qquad
	a_r=\lambda_rb_{r-1}+\mu_rxb_r,\ 1\le r\le n-1,
	\qquad
	a_n=\lambda_nb_{n-1},
	\]
	where
	\(\lambda_1,\ldots,\lambda_n,\mu_0,\ldots,\mu_{n-1}\)
	are nonzero constants independent of \(x\).
	
	Then 
	\[
	a_0=a_1=\cdots=a_n=0.
	\]
\end{lem}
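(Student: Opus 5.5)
The plan is to argue by induction on the index $r$, propagating vanishing forward from $a_0$ and then closing the chain with the last equation. The key idea is that $x$ is arbitrary in $\mathbb C^*$, so the unknowns $b_r$ (which may depend on $x$) can be solved for successively. We then let $x$ vary to force a polynomial identity in $x$ whose coefficients must vanish.

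Fix $x\in\mathbb C^*$. From $a_0=\mu_0xb_0$ we get $b_0=a_0/(\mu_0x)$. Substituting into the equation for $r=1$ gives
\[
b_1=\frac{a_1-\lambda_1b_0}{\mu_1x}=\frac{a_1}{\mu_1x}-\frac{\lambda_1a_0}{\mu_0\mu_1x^2}.
\]
Inductively, each $b_r$ $(0\le r\le n-1)$ is uniquely determined. It is a Laurent polynomial in $x$ of the form
\[
b_r=\sum_{s=0}^{r}c_{r,s}\,a_s\,x^{-(r-s+1)},\qquad c_{r,s}\neq0,
\]
where the coefficients $c_{r,s}$ are explicit products of the constants $\pm\lambda_k$ and $\mu_k^{-1}$, independent of $x$. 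In particular $c_{r,s}=(-1)^{r-s}\lambda_{s+1}\cdots\lambda_r/(\mu_s\cdots\mu_r)$, which is nonzero because all the $\lambda$'s and $\mu$'s are nonzero. The final equation $a_n=\lambda_nb_{n-1}$ then becomes
\[
a_n=\lambda_n\sum_{s=0}^{n-1}c_{n-1,s}\,a_s\,x^{-(n-s)}\qquad\text{for all }x\in\mathbb C^*.
\]

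Multiplying by $x^{n}$ turns this into a polynomial identity in $x$:
\[
a_nx^n-\lambda_n\sum_{s=0}^{n-1}c_{n-1,s}\,a_s\,x^{s}=0,
\]
and it holds on the infinite set $\mathbb C^*$. Hence every coefficient vanishes. This gives $a_n=0$, and $\lambda_nc_{n-1,s}a_s=0$ for each $s\le n-1$, so $a_s=0$ since $\lambda_nc_{n-1,s}\ne0$.

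The only point requiring care is the bookkeeping in the inductive formula for $b_r$. What must be checked is that the coefficient of $a_s$ in $b_{n-1}$ is a single monomial in $x$ with nonzero constant, so that distinct $a_s$ contribute distinct powers of $x$ and no cancellation can occur. This follows from the recursion $b_r=(a_r-\lambda_rb_{r-1})/(\mu_rx)$, which shifts all existing powers down by exactly one and introduces $a_r$ at power $x^{-1}$. The hypothesis $n\ge2$ is not essentially needed; the argument works for any $n\ge1$.
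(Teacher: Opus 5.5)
Your proof is correct and follows the paper's argument: you solve the recursion for each $b_r$ as a Laurent polynomial in $x$ in which every $a_s$ appears with a single nonzero monomial coefficient, substitute into $a_n=\lambda_n b_{n-1}$, and clear denominators to get a polynomial that vanishes on $\mathbb C^*$. Your explicit formula $c_{r,s}=(-1)^{r-s}\lambda_{s+1}\cdots\lambda_r/(\mu_s\cdots\mu_r)$ makes precise the nonvanishing of the constants that the paper only asserts, and your remark that $n\ge1$ already suffices is also correct.
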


\begin{proof}
From \(a_0=\mu_0xb_0\), we have
$
b_0=\mu_0^{-1}x^{-1}a_0.
$
Substituting this into the equation for \(a_1\), and continuing
inductively, we obtain
\[
b_r=\mu_r^{-1}x^{-1}a_r+
\sum_{t=1}^{r}\xi_{r,t}x^{-t-1}a_{r-t},
\]
where all \(\xi_{r,t}\) are nonzero constants independent of \(x\).
Substitution into \(a_n=\lambda_nb_{n-1}\) gives
\[
a_n+\eta_1x^{-1}a_{n-1}
+\cdots+\eta_nx^{-n}a_0=0,
\qquad x\in\mathbb C^*,
\]
where	\(\eta_1,\ldots,\eta_n\in\mathbb C\) are nonzero constants independent of \(x\). After multiplying the resulting relation by a
	suitable power of \(x\), we obtain a polynomial identity in \(x\).
	Since the identity holds for infinitely many values of \(x\), all its
	coefficients vanish, which gives
	\(a_0=a_1=\cdots=a_n=0\).
\end{proof}

\section{Local Derivations on \(\B\)}

In this section, we prove Theorem~1.1 by gradually simplifying a local
derivation. Let \(\Delta\in\operatorname{LDer}(\mathcal B)\). Using the
description of \(\operatorname{Der}(\mathcal B)\), we subtract suitable
derivations step by step and study the resulting action of \(\Delta\) on
the basis elements of \(\mathcal B\). The main ingredients are
Lemma~3.6, which allows us to normalize the values of \(\Delta\) on
\(L_{0,i}\) and \(L_{\alpha,0}\), and Lemma~3.9, which shows that the
normalized local derivation is zero on all elements of \(\mathcal B\).

Throughout this section, all sums are finite. Unless otherwise stated,
we omit zero coefficients and assume that the extreme terms in each
finite sum have nonzero coefficients.

We first consider the action of \(\Delta\) on the elements
\(L_{0,i}\).

\begin{lem}\label{lem1}
	Let \(\Delta\) be a local derivation of \(\B\).
	Replacing \(\Delta\) by \(\Delta-D\) for a suitable
	\(D\in\operatorname{Der}(\B)\) if necessary, we may assume that
	\[
	\Delta(L_{0,i})=0,
	\qquad
	i\in\mathbb Z_{\ge0}.
	\]
\end{lem}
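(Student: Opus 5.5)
The plan is to subtract an inner derivation that kills \(\Delta(L_{0,0})\), show that \(\Delta\) then acts diagonally on each \(L_{0,i}\), and finally remove the diagonal part with a multiple of \(D_0\). Throughout we use that \(\operatorname{LDer}(\B)\) is a linear space containing \(\operatorname{Der}(\B)\), so subtracting a derivation keeps \(\Delta\) a local derivation. We also use Lemma~\ref{lem-der}: every derivation has the form \(\operatorname{ad}_w+cD_0\).

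\textbf{Step 1: normalize \(\Delta(L_{0,0})\).} For \(w=\sum a_{\beta,j}L_{\beta,j}\) we have
\[
[w,L_{0,i}]=-\sum_{\beta,j}\beta(i+1)a_{\beta,j}L_{\beta,i+j}.
\]
Write \(\Delta(L_{0,0})=\operatorname{ad}_{w_0}(L_{0,0})+c_0D_0(L_{0,0})=[w_0,L_{0,0}]\). Replacing \(\Delta\) by \(\Delta-\operatorname{ad}_{w_0}\), we may assume \(\Delta(L_{0,0})=0\).

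\textbf{Step 2: each \(\Delta(L_{0,i})\) is a multiple of \(L_{0,i}\).} Fix \(i\ge1\) and \(x\in\mathbb C^*\), and apply the local property to \(z=L_{0,0}+xL_{0,i}\). This gives some \(w=\sum a_{\beta,j}L_{\beta,j}\) and \(c\in\mathbb C\) with
\[
x\Delta(L_{0,i})=c\,i\,x\,L_{0,i}-\sum_{\beta,j}\beta\, a_{\beta,j}\bigl(L_{\beta,j}+x(i+1)L_{\beta,i+j}\bigr).
\]
Write \(\Delta(L_{0,i})=\sum f_{\beta,k}L_{\beta,k}\).

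The \(\beta=0\) component shows that \(\Delta(L_{0,i})\) has \(\beta=0\) part equal to \(ci\,L_{0,i}\) only.

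For fixed \(\beta\neq0\) and fixed residue \(r\) modulo \(i\), the coefficients along \(k=r,r+i,r+2i,\dots\) satisfy
\[
xf_{\beta,k}=-\beta\bigl(a_{\beta,k}+x(i+1)a_{\beta,k-i}\bigr).
\]
Since only finitely many coefficients are nonzero, this is a finite triangular chain of exactly the shape in Lemma~\ref{lem-coef-eli}. Its unknowns \(b\) are the \(a_{\beta,\cdot}\), which vary with \(x\), while the \(f\) are independent of \(x\). After a harmless rescaling by \(x\), the lemma forces all \(f_{\beta,k}=0\).

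Short chains with \(n<2\) have to be checked by hand, which is straightforward: the same alternating-sum argument gives a polynomial identity in \(x\). Hence \(\Delta(L_{0,i})=\gamma_iL_{0,i}\) for some \(\gamma_i\in\mathbb C\), and \(\gamma_0=0\).

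\textbf{Step 3: show \(\gamma_i=i\gamma_1\).} Apply the local property to \(z=L_{0,1}+L_{0,i}\) with \(i\ge2\):
\[
\gamma_1L_{0,1}+\gamma_iL_{0,i}=c(L_{0,1}+iL_{0,i})+[w,z].
\]
Since \([L_{0,j},L_{0,k}]=0\), the term \([w,z]\) has no \(\beta=0\) component. Comparing the \(L_{0,1}\) and \(L_{0,i}\) coefficients gives \(\gamma_1=c\) and \(\gamma_i=ic=i\gamma_1\). Finally, replacing \(\Delta\) by \(\Delta-\gamma_1D_0\) yields \(\Delta(L_{0,i})=0\) for all \(i\). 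This also preserves \(\Delta(L_{0,0})=0\), because \(D_0(L_{0,0})=0\).

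The main obstacle is Step 2, namely the careful bookkeeping that puts the coefficient chains into the exact form required by Lemma~\ref{lem-coef-eli}, including the degenerate short chains.
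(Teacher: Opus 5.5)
Your proposal is correct and follows the paper's proof essentially step by step. You normalize $\Delta(L_{0,0})=0$, test locality on the combination of $L_{0,0}$ and $L_{0,i}$ with a parameter $x$, feed each residue-class chain into Lemma~\ref{lem-coef-eli} (with $x^{-1}$ in place of $x$), and then use $L_{0,1}+L_{0,i}$ to get $\gamma_i=i\gamma_1$ before subtracting $\gamma_1D_0$. The one point worth writing out, as the paper does, is that the chain length must be independent of $x$, as Lemma~\ref{lem-coef-eli} requires: since $f_{\beta,k}=0$ above the top index $k_{\max}$ of the class and $v$ is finite, downward induction in your relation gives $a_{\beta,k}=0$ for all $k\ge k_{\max}$ in that class. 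Your explicit check of the short chains also covers the case $n=1$, which the paper's lemma formally excludes.
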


\begin{proof}
	By locality, there exists \(D'\in\operatorname{Der}(\B)\) such that
	\(\Delta(L_{0,0})=D'(L_{0,0})\). Since the difference of a local
	derivation and a derivation is again a local derivation, replacing
	\(\Delta\) by \(\Delta-D'\), and retaining the notation \(\Delta\), we
	may assume that
	\begin{equation}\label{eq:vertical-L00}
		\Delta(L_{0,0})=0.
	\end{equation}
	
	Fix \(i\in\mathbb Z_{\geq1}\). Grouping the second indices modulo \(i\),
	we write
	\begin{equation}\label{eq:vertical-initial-expansion}
		\Delta(L_{0,i})
		=
		\sum_{\gamma\in\Gamma}
		\sum_{j\in A_\gamma}
		\sum_{s=p_{\gamma,j}}^{q_{\gamma,j}}
		c_{\gamma,j+si}L_{\gamma,j+si},
	\end{equation}
	where \(\Gamma\) is a finite subset of \(\mathbb Z\),
	\(A_\gamma\subseteq\{0,\ldots,i-1\}\), $c_{\gamma,j+si}\in\C$, and
	\(0\le p_{\gamma,j}\le q_{\gamma,j}\) are integers. 
	
	For \(x\in\mathbb C^*\), set \(u=L_{0,i}+xL_{0,0}\). By
	\eqref{eq:vertical-L00} and linearity,
	\(\Delta(u)=\Delta(L_{0,i})\). By locality and
	Lemma~\ref{lem-der}, there exist \(v\in \B\) and
	\(d\in\mathbb C\), possibly depending on \(x\), such that
	\[
	\Delta(u)=[v,u]+dD_0(u).
	\]
Write
	\[
	v
	=
	\sum_{\gamma\in\Gamma'}
	\sum_{j\in A'_\gamma}
	\sum_{s=p'_{\gamma,j}}^{q'_{\gamma,j}}
	b_{\gamma,j+si}L_{\gamma,j+si},
	\]
	where \(\Gamma'\) is a finite subset of \(\mathbb Z\),
	\(A'_\gamma\subseteq\{0,\ldots,i-1\}\), $b_{\gamma,j+si}\in\C$, and
	\(0\leq p'_{\gamma,j}\leq q'_{\gamma,j}\) are integers. Then
	\begin{align}
		\Delta(L_{0,i})
		&=
		-\sum_{\gamma\in\Gamma'}
		\sum_{j\in A'_\gamma}
		\sum_{s=p'_{\gamma,j}}^{q'_{\gamma,j}}
		(i+1)\gamma b_{\gamma,j+si}
		L_{\gamma,j+(s+1)i}
		\nonumber\\
		&\quad
		-x\sum_{\gamma\in\Gamma'}
		\sum_{j\in A'_\gamma}
		\sum_{s=p'_{\gamma,j}}^{q'_{\gamma,j}}
		\gamma b_{\gamma,j+si}L_{\gamma,j+si}
		+diL_{0,i}.
		\label{eq:vertical-local-expansion}
	\end{align}
	
	Fix
	\(0\neq \gamma\in\Gamma\) and
	\(j\in A_\gamma\).
	Comparing the coefficient of
	\(L_{\gamma,j+si}\) in
	\eqref{eq:vertical-initial-expansion} and
	\eqref{eq:vertical-local-expansion}, we obtain
	\begin{equation}\label{eq:vertical-recurrence}
		c_{\gamma,j+si}
		=
		-(i+1)\gamma b_{\gamma,j+(s-1)i}
		-x\gamma b_{\gamma,j+si}.
	\end{equation}
	Since \(c_{\gamma,j+si}=0\) for \(s<p_{\gamma,j}\) and
	\(s>q_{\gamma,j}\), and only finitely many coefficients
	\(b_{\gamma,j+si}\) are nonzero, \eqref{eq:vertical-recurrence} implies
	\[
	b_{\gamma,j+si}=0,
	\qquad s<p_{\gamma,j}\ \text{or}\  s\ge q_{\gamma,j}.
	\]
	If \(p_{\gamma,j}=q_{\gamma,j}\), then \eqref{eq:vertical-recurrence} gives
	\(c_{\gamma,j+p_{\gamma,j}i}=0\), contrary to the choice of the coefficient string.
	Hence \(p_{\gamma,j}<q_{\gamma,j}\), and the relations for \(s=p_{\gamma,j},\ldots,q_{\gamma,j}\) in \eqref{eq:vertical-recurrence} reduce to
	\[
	\begin{aligned}
		c_{\gamma,j+p_{\gamma,j}i}
		&=-x\gamma b_{\gamma,j+p_{\gamma,j}i},\\
		c_{\gamma,j+(s+1)i}
		&=-(i+1)\gamma b_{\gamma,j+si}
		-x\gamma b_{\gamma,j+(s+1)i},
		\qquad p_{\gamma,j}\leq s\leq q_{\gamma,j}-2,\\
		c_{\gamma,j+q_{\gamma,j}i}
		&=-(i+1)\gamma b_{\gamma,j+(q_{\gamma,j}-1)i}.
	\end{aligned}
	\]
	Since \(\gamma\neq0\), applying Lemma ~\ref{lem-coef-eli}, we obtain
	$$
	c_{\gamma,j+si}=0,\qquad p_{\gamma,j}\leq s\leq q_{\gamma,j}.$$
	
	For \(\gamma=0\), comparison of the coefficients in \eqref{eq:vertical-initial-expansion} and
	\eqref{eq:vertical-local-expansion} shows that $c_{0,k}=0$  for $k\neq i$. From \eqref{eq:vertical-initial-expansion}, we have
	$$\Delta(L_{0,i})=c_iL_{0,i},\qquad i\in\mathbb Z_{\geq0},$$
	where
	\(c_i\in\mathbb C\).
	 Moreover, \(c_0=0\) by \eqref{eq:vertical-L00}.
	
	Set
	\[
	\Delta'=\Delta-c_1D_0.
	\]
	Then \(\Delta'\) is a local derivation,
	\(\Delta'(L_{0,0})=\Delta'(L_{0,1})=0\), and
	\[
	\Delta'(L_{0,i})
	=
	(c_i-ic_1)L_{0,i},
	\qquad
	i\in\Z_{\geq2}.
	\]
	
	Fix \(i\geq2\) and set \(w=L_{0,i}+L_{0,1}\). Thus
	$
		\Delta'(w)
		=
		(c_i-ic_1)L_{0,i}.
	$
	By locality and Lemma~\ref{lem-der}, there exist
	\[
	z=\sum_{(\gamma,k)\in S}
	d_{\gamma,k}L_{\gamma,k}\in \B,
	\qquad
	d'\in\mathbb C,
	\]
	where \(S\subseteq\mathbb Z\times\mathbb Z_{\geq0}\) is finite, $d_{\gamma,k}\in\C$, such that
	\(
	\Delta'(w)=[z,w]+d'D_0(w).
	\)
	Consequently,
	\begin{align}
		(c_i-ic_1)L_{0,i}
		=
		-\sum_{(\gamma,k)\in S}
		(i+1)\gamma d_{\gamma,k}L_{\gamma,k+i}
		-2\sum_{(\gamma,k)\in S}
		\gamma d_{\gamma,k}L_{\gamma,k+1}
		+d'\bigl(iL_{0,i}+L_{0,1}\bigr).
		\label{eq:second-vertical-local}
	\end{align}
	Comparing the coefficient of \(L_{0,1}\) on both sides of
	\eqref{eq:second-vertical-local} gives \(d'=0\). Comparing the coefficient
	of \(L_{0,i}\) then gives
	$
	c_i-ic_1=0.
	$
	Therefore,
	\[
	\Delta'(L_{0,i})=0,
	\qquad
	i\in\mathbb Z_{\geq0}.
	\]
	
	This proves the lemma.
\end{proof}

After the preceding normalization, we turn to the elements
\(L_{\alpha,0}\). The next five lemmas successively show that, after
subtracting suitable derivations, the local derivation vanishes on all
\(L_{\alpha,0}\).

\begin{lem}\label{lem2}
	Let \(\Delta\) be a local derivation of \(\B\) satisfying
	$
	\Delta(L_{0,0})=0.
	$
	Then, for every \(\alpha\in\mathbb Z\), there exist a finite subset
	\(K\subseteq\mathbb Z_{\geq0}\) and coefficients
	\(c_{s\alpha,k}\in\mathbb C\) such that
	\[
	\Delta(L_{\alpha,0})
	=
	\sum_{k\in K}\sum_{s=1}^{k+1}
	c_{s\alpha,k}L_{s\alpha,k}.
	\]
\end{lem}

\begin{proof}
	The assertion is immediate for \(\alpha=0\).
	We first assume that \(\alpha \in\mathbb Z_{\geq1}\).
	
	Grouping the first indices modulo \(\alpha\), we may write
	\begin{equation}\label{lem3.2-1}
		\Delta(L_{\alpha,0})
		=
		\sum_{k\in K}
		\sum_{j\in B_k}
		\sum_{s=l_{k,j}}^{m_{k,j}}
		c_{j+s\alpha,k}L_{j+s\alpha,k},
	\end{equation}
	where \(K\subseteq\mathbb Z_{\geq0}\) is finite,
	\(B_k\subseteq\{0,\ldots,\alpha-1\}\), $c_{j+s\alpha,k}\in\C$, and
	\(l_{k,j}\leq m_{k,j}\) are integers.
	
	For \(x\in\mathbb C^*\), set \(u=L_{\alpha,0}+xL_{0,0}\). Since
	\(\Delta(L_{0,0})=0\), linearity gives \(\Delta(u)=\Delta(L_{\alpha,0})\).
	By locality and Lemma~\ref{lem-der}, there exist
	\(v\in \B\) and \(d\in\mathbb C\), possibly depending on \(x\), such that
	\[
	\Delta(u)=[v,u]+dD_0(u).
	\]
	Write
	\[
	v
	=
	\sum_{k,j,s}
	b_{j+s\alpha,k}L_{j+s\alpha,k},
	\]
	where the sum is finite and $b_{j+s\alpha,k}\in\C$. Using the defining bracket of \(\B\), we obtain
	\begin{align}
		\Delta(L_{\alpha,0})
		=
		\sum_{k,j,s}
		\bigl((k+1-s)\alpha-j\bigr)
		b_{j+s\alpha,k}L_{j+(s+1)\alpha,k}
		-x\sum_{k,j,s}
		(j+s\alpha)b_{j+s\alpha,k}L_{j+s\alpha,k}.
		\label{lem3.2-2}
	\end{align}
	Comparing the coefficient of
	\(L_{j+s\alpha,k}\) in
	\eqref{lem3.2-1} and
	\eqref{lem3.2-2}, we obtain
	\begin{equation}\label{lem3.2-3}
		c_{j+s\alpha,k}
		=
		\bigl((k+2-s)\alpha-j\bigr)b_{j+(s-1)\alpha,k}
		-x(j+s\alpha)b_{j+s\alpha,k}.
	\end{equation}
	
	\medskip
	\noindent
	\textbf{Claim 1.} For every $k\in K$,  $B_k=\{0\}$.
	
	If \(\alpha=1\), the claim is
	immediate.  Assume that $\alpha\geq2$.
	Fix $k\in K$ and suppose that
	$j\in B_k$ with
	$1\leq j\leq\alpha-1$.
	By the choice of \(l_{k,j}\) and \(m_{k,j}\),
	\(c_{j+s\alpha,k}=0\) whenever \(s<l_{k,j}\) or \(s>m_{k,j}\), whereas
	$
	c_{j+l_{k,j}\alpha,k}\neq0$ and $c_{j+m_{k,j}\alpha,k}\neq0$.
	Note that both \(j+s\alpha\) and \((k+2-s)\alpha-j\) are nonzero for
	every integer \(s\). Together with \eqref{lem3.2-3} and the fact that only finitely
	many coefficients \(b_{j+s\alpha,k}\) are nonzero, we conclude that
	\[
	b_{j+s\alpha,k}=0,
	\qquad
	s<l_{k,j}\ \text{or}\ s\ge m_{k,j}.
	\]

	If $l_{k,j}=m_{k,j}$, then
	$
	c_{j+l_{k,j}\alpha,k}
	=
	-x(j+l_{k,j}\alpha)b_{j+l_{k,j}\alpha,k}=0,
	$
	which contradicts the choice of $l_{k,j}$.
	Hence $l_{k,j}<m_{k,j}$. From
	\eqref{lem3.2-3}, we have
	\[
	\begin{aligned}
		c_{j+l_{k,j}\alpha,k}
		&=
		-x(j+l_{k,j}\alpha)b_{j+l_{k,j}\alpha,k},\\
		c_{j+s\alpha,k}
		&=
		((k+2-s)\alpha-j)b_{j+(s-1)\alpha,k}
		-x(j+s\alpha)b_{j+s\alpha,k},
		\quad l_{k,j}<s<m_{k,j},
		\\
		c_{j+m_{k,j}\alpha,k}
		&=
		((k+2-m_{k,j})\alpha-j)
		b_{j+(m_{k,j}-1)\alpha,k}.
	\end{aligned}
	\]
	Since \(1\leq j\leq\alpha-1\), Lemma \ref{lem-coef-eli}
	gives
	\[
	c_{j+l_{k,j}\alpha,k}
	=
	\cdots
	=
	c_{j+m_{k,j}\alpha,k}
	=
	0,
	\]
	contrary to the choice of the coefficient string. Therefore
	$B_k=\{0\}$.
	
	By Claim 1, \eqref{lem3.2-1} becomes
	\begin{equation*}
		\Delta(L_{\alpha,0})
		=
		\sum_{k\in K}\sum_{s=l_k}^{m_k}
		c_{s\alpha,k}L_{s\alpha,k},
	\end{equation*}	
	where \(l_k\leq m_k\) are the smallest and largest indices for which
	\(c_{s\alpha,k}\neq0\).
	
	\medskip
	\noindent
	\textbf{Claim 2.}
	For every \(k\in K\),
	$
	l_k\geq1.
	$
	
	Fix $k\in K$ and suppose that $l_k\leq0$. 
	Putting $j=0$ in
	\eqref{lem3.2-3}, we obtain
	\begin{equation}\label{lem3.2-5}
		c_{s\alpha,k}
		=
		(k+2-s)\alpha b_{(s-1)\alpha,k}
		-
		xs\alpha b_{s\alpha,k}.
	\end{equation}
Since
	$c_{s\alpha,k}=0$ for $s<l_k$, relation
	\eqref{lem3.2-5} implies
	$
	b_{s\alpha,k}=0$ for $s<l_k$.
	
	If $l_k=0$,  then \(b_{-\alpha,k}=0\), and hence
	$
	c_{0,k}
	=
	(k+2)\alpha b_{-\alpha,k}=0,
	$
	contradicting the definition of $l_k$.

If $l_k<0$, from \eqref{lem3.2-5}, we have
\[
\begin{aligned}
	c_{l_k\alpha,k}
	&=
	-xl_k\alpha b_{l\alpha,k},\\
	c_{s\alpha,k}
	&=
	(k+2-s)\alpha b_{(s-1)\alpha,k}
	-xs\alpha b_{s\alpha,k},
	\quad l_k<s<0,
	\\
	c_{0,k}
	&=
	(k+2)\alpha b_{-\alpha,k}.
\end{aligned}
\]
All transition coefficients are nonzero. Applying   Lemma \ref{lem-coef-eli}, we obtain
\(c_{l_k\alpha,k}=\cdots=c_{0,k}=0\), contradicting
\(c_{l_k\alpha,k}\neq0\). Therefore \(l_k\geq1\), proving Claim~2.

\medskip
\noindent
\textbf{Claim 3.}
For every $k\in K$,
$
m_k\leq k+1.
$

Fix $k\in K$ and suppose that
$m_k\geq k+2$. Since
$c_{s\alpha,k}=0$ for $s>m_k$, relation
\eqref{lem3.2-5} implies
$
b_{s\alpha,k}=0$ for $ s\geq m_k$.

If $m_k=k+2$, then \(b_{m_k\alpha,k}=0\), and
\eqref{lem3.2-5} gives
$
c_{m_k\alpha,k}=0,
$
contradicting the definition of $m_k$.

Assume that \(m_k>k+2\). Considering
\eqref{lem3.2-5} for
$k+2\leq s\leq m_k$, we obtain
\[
\begin{aligned}
	c_{(k+2)\alpha,k}
	&=-x(k+2)\alpha\,b_{(k+2)\alpha,k},\\
	c_{s\alpha,k}
	&=(k+2-s)\alpha\,b_{(s-1)\alpha,k}
	-xs\alpha\,b_{s\alpha,k},
	\qquad k+2<s<m_k,\\
	c_{m_k\alpha,k}
	&=(k+2-m_k)\alpha\,b_{(m_k-1)\alpha,k}.
\end{aligned}
\]
All transition coefficients are nonzero. Hence
 Lemma \ref{lem-coef-eli} yields
\(c_{(k+2)\alpha,k}=\cdots=c_{m_k\alpha,k}=0\), contradicting
\(c_{m_k\alpha,k}\neq0\). Thus \(m_k\leq k+1\), proving Claim~3.

Combining Claims~1--3, we obtain
\[
\Delta(L_{\alpha,0})
=
\sum_{k\in K}\sum_{s=1}^{k+1}
c_{s\alpha,k}L_{s\alpha,k},
\qquad
\alpha\in\mathbb Z_{\geq1}.
\]

It remains to consider \(\alpha\in\mathbb Z_{\leq-1}\). Put
\(\beta=-\alpha\). By
Corollary~\ref{coro-B}, the map
$
\widetilde{\Delta}
=
\sigma_0\Delta\sigma_0^{-1}
$
is a local derivation satisfying
\(\widetilde{\Delta}(L_{0,0})=0\). Applying the positive case to
\(\widetilde{\Delta}(L_{\beta,0})\), we obtain
\[
\widetilde{\Delta}(L_{\beta,0})
=
\sum_{k\in F}\sum_{s=1}^{k+1}
d_{s\beta,k}L_{s\beta,k}
\]
for some finite set \(F\subseteq\mathbb Z_{\geq0}\) and $d_{s\beta,k}\in\C$. Since
$
\widetilde{\Delta}(L_{\beta,0})
=
-\sigma_0\bigl(\Delta(L_{\alpha,0})\bigr),
$
conjugating back by \(\sigma_0\) gives
\[
\Delta(L_{\alpha,0})
=
\sum_{k\in F}\sum_{s=1}^{k+1}
d_{s\beta,k}L_{s\alpha,k}.
\]
Renaming the coefficients completes the proof.
\end{proof}

The next lemma shows that only the terms \(L_{\alpha,k}\) can occur in
\(\Delta(L_{\alpha,0})\).

\bl{lem3}
	Let $\Delta$ be a local derivation of $\B$ satisfying
	$\Delta(L_{0,0})=0$. Then, for every $\alpha\in\Z$, there exist a finite
	subset $K\subseteq\Z_{\geq0}$ and coefficients $c_{\alpha,k}\in\C$ such that
	\[
	\Delta(L_{\alpha,0})
	=\sum_{k\in K}c_{\alpha,k}L_{\alpha,k}.
	\]
\el

\begin{proof}
	The assertion is immediate for $\alpha=0$. Assume first that $\alpha\in\Z_{\geq1}$.
	By Lemma \ref{lem2},  we may
	write
	\begin{equation*}\label{eq:Delta-alpha}
		\Delta(L_{\alpha,0})
		=\sum_{k=p}^q\sum_{s=1}^{k+1}
		c_{s\alpha,k}L_{s\alpha,k}
	\end{equation*}
	for some integers $p\leq q,0\leq k\in\mathbb Z$, and $c_{s\alpha,k}\in\C$. Choose an integer
	$\beta>(q+2)\alpha$. By Lemma \ref{lem2}, there exist a finite
	subset $K'\subseteq\Z_{\geq0}$ and coefficients $d_{s\beta,k}\in\C$ such that
	\begin{equation*}\label{eq:Delta-beta}
		\Delta(L_{\beta,0})
		=\sum_{k\in K'}\sum_{s=1}^{k+1}
		d_{s\beta,k}L_{s\beta,k}.
	\end{equation*}
	
	For $x\in\C^{*}$, set
	$u=L_{\alpha,0}+xL_{\beta,0}$. By linearity, we have
	\begin{equation}\label{eq:Delta-w-linear}
		\Delta(u)
		=\sum_{k=p}^q\sum_{s=1}^{k+1}
		c_{s\alpha,k}L_{s\alpha,k}
		+x\sum_{k\in K'}\sum_{s=1}^{k+1}
		d_{s\beta,k}L_{s\beta,k}.
	\end{equation}
	By locality and the description of $\Der(\B)$, there exist $v\in \B$ and
	$d\in\C$, possibly depending on $x$, such that
	$\Delta(u)=[v,u]+dD_0(u)$. Since $D_0(u)=0$, writing
	$v=\sum_{(\gamma,k)\in S}b_{\gamma,k}L_{\gamma,k}$ gives
	\begin{equation}\label{eq:Delta-w-local}
			\Delta(u)
			=\sum_{(\gamma,k)\in S}
			\bigl((k+1)\alpha-\gamma\bigr)b_{\gamma,k}
			L_{\gamma+\alpha,k} 
			+x\sum_{(\gamma,k)\in S}
			\bigl((k+1)\beta-\gamma\bigr)b_{\gamma,k}
			L_{\gamma+\beta,k},
	\end{equation}
	where \(S\subseteq\mathbb Z\times\mathbb Z_{\geq0}\) is finite and $b_{\gamma,k}\in\C$.
	
	Fix $k\in \{p,\dots,q\}$. 
	
	\medskip
	\noindent\textbf{Claim 1.}
	If $b_{\gamma,k}\neq0$, then $\gamma\geq0$.
	
	Suppose that $b_{\gamma,k}\neq0$ for some $\gamma<0$. 
	For every
	\(t\geq0\), compare the coefficient of
	$
	L_{\gamma+(t+1)\alpha-t\beta,k}
	$
	in \eqref{eq:Delta-w-linear} and
	\eqref{eq:Delta-w-local}. The basis vector does not occur in \eqref{eq:Delta-w-linear} because 
	$\gamma+(t+1)\alpha-t\beta<\alpha$. Hence we obtain
	\begin{align*}
		0
		=\bigl((k+1-t)\alpha+t\beta-\gamma\bigr)
		b_{\gamma+t(\alpha-\beta),k} 
		+x\bigl((k+t+2)\beta-(t+1)\alpha-\gamma\bigr)
		b_{\gamma+(t+1)(\alpha-\beta),k}.
	\end{align*}
	Since $\beta>\alpha$ and $\gamma<0$, we have $(k+1-t)\alpha+t\beta-\gamma=(k+1)\alpha+t(\beta-\alpha)-\gamma>0$ and $(k+t+2)\beta-(t+1)\alpha-\gamma \neq0$.  Hence
	$b_{\gamma+t(\alpha-\beta),k}\neq0$ implies
	$b_{\gamma+(t+1)(\alpha-\beta),k}\neq0$. Starting with
	$b_{\gamma,k}\neq0$, this produces infinitely many nonzero coefficients
	of $v$, a contradiction. This proves Claim~1.
	
	\medskip
	\noindent
	\textbf{Claim 2.}
	If
	\(b_{\gamma,k}\neq0\), then $
	\gamma=r\beta+m\alpha 
	$
	for some \(r,m\in\mathbb Z_{\geq0}\) satisfying 
	\(
	0\leq r\leq k+1\) and \(0\leq m\leq k+1-r\).

	By Claim~1, we have \(\gamma\geq0\). Suppose, to the contrary, that
	\(b_{\gamma,k}\neq0\), but that
	$\gamma$ has no representation of the stated form. We prove inductively that
	\begin{equation}\label{eq:claim2-propagation}
		b_{\gamma+t(\beta-\alpha),k}\neq0,
		\qquad t\in\mathbb Z_{\geq0}.
	\end{equation}
	The assertion is true for \(t=0\) by assumption.
	
	Assume that
	\(b_{\gamma+t(\beta-\alpha),k}\neq0\) for some \(t\geq0\), and compare
	the coefficient of \(L_{\gamma+(t+1)\beta-t\alpha,k}\)
	in \eqref{eq:Delta-w-linear} and \eqref{eq:Delta-w-local}. We first show
	that this basis vector does not occur in
	\eqref{eq:Delta-w-linear}.

Indeed, suppose first that
$
\gamma+(t+1)\beta-t\alpha=\ell\alpha
$
for some \(1\leq\ell\leq k+1\). Then
\[
\gamma
=(t+\ell)\alpha-(t+1)\beta
=(\ell-1)\alpha-(t+1)(\beta-\alpha)
\leq k\alpha-(\beta-\alpha)<0,
\]
because \(\ell\leq k+1\leq q+1\) and
\(\beta>(q+2)\alpha\). This contradicts Claim~1. 

Suppose next that
$
\gamma+(t+1)\beta-t\alpha=\ell\beta
$
for some \(1\leq\ell\leq k+1\). If \(\ell\geq t+1\), then
$
\gamma=(\ell-t-1)\beta+t\alpha
$
is of the form \(r\beta+m\alpha\), where
\(r,m\in\mathbb Z_{\geq0}\) and
\(r+m=\ell-1\leq k\), contrary to the assumption on \(\gamma\).
Hence \(\ell\leq t\), and therefore
\[
\gamma
=(\ell-t-1)\beta+t\alpha=(\ell-1)\alpha-(t+1-\ell)(\beta-\alpha)
\leq k\alpha-(\beta-\alpha)<0,
\]
again contradicting Claim~1.

	Thus the coefficient under comparison is zero on the left, while on the
right we have
\begin{align*}
	0
	=\bigl((k+t+2)\alpha-(t+1)\beta-\gamma\bigr)
	b_{\gamma+(t+1)(\beta-\alpha),k}
	+x\bigl((k+1-t)\beta+t\alpha-\gamma\bigr)
	b_{\gamma+t(\beta-\alpha),k}.
\end{align*}
Consequently,
$b_{\gamma+t(\beta-\alpha),k}\neq0$ implies
$b_{\gamma+(t+1)(\beta-\alpha),k}\neq0$. This produces infinitely
many nonzero coefficients of $v$, a contradiction. Claim~2 follows.	

\medskip
	\noindent
	\textbf{Claim 3.} $c_{s\alpha,k}=0$ for \(2\leq s\leq k+1\).
    
 Claims 1 and 2 imply $b_{s\alpha-\beta,k}=b_{s\beta-\alpha,k}=0$. Comparing the coefficients of
$L_{r\beta+(s-r)\alpha,k}$ for $0\leq r\leq s$ in \eqref{eq:Delta-w-linear} and \eqref{eq:Delta-w-local}  gives
\begin{equation*}
	\begin{aligned}
		c_{s\alpha,k}&=(k+2-s)\alpha b_{(s-1)\alpha,k},\\
		0&=((k+2-s+r)\alpha-r\beta)b_{r\beta+(s-r-1)\alpha,k}+x((k+2-r)\beta-(s-r)\alpha)b_{(r-1)\beta+(s-r)\alpha,k},\\
        d_{s\beta,k}&=(k+2-s)\beta b_{(s-1)\beta,k},
	\end{aligned}
\end{equation*}
where $1\leq r\leq s-1$. Since \(\beta>(q+2)\alpha\) and $k\leq q$, we have $(k+3-s)\alpha-\beta>0$  and $(k+2-r)\beta-(s-r)\alpha>0$. 
Since \(c_{s\alpha,k}\) and \(d_{s\beta,k}\) are independent of \(x\),
whereas the above identity holds for every \(x\in\mathbb C^*\), we must have
\[
c_{s\alpha,k}=d_{s\beta,k}=0, \qquad
2\leq s\leq k+1.
\]
This proves Claim~3.

Thus 
\[
\Delta(L_{\alpha,0})
=\sum_{k=p}^qc_{\alpha,k}L_{\alpha,k}
\qquad \alpha\in\mathbb Z_{\geq1}.
\]

Finally, for \(\alpha\in\mathbb Z_{\leq-1}\), let
\[
\widetilde{\Delta}=\sigma_0\Delta\sigma_0^{-1}.
\]
By Corollary~\ref{coro-B}, \(\widetilde{\Delta}\) is a local derivation satisfying
the same assumptions. Applying the positive case to
\(\widetilde{\Delta}(L_{-\alpha,0})\) and conjugating back by
\(\sigma_0\), we obtain the desired conclusion for
\(\alpha\leq-1\).

The proof is complete.
\end{proof}


\bl{lem4}
	Let $\Delta$ be a local derivation of $\B$ satisfying
	$\Delta(L_{0,0})=0$. Replacing \(\Delta\) by \(\Delta-D\) for a suitable
	\(D\in\operatorname{Der}(\B)\) if necessary, we may assume that
	\[
	\Delta(L_{0,i})=\Delta(L_{\alpha,0})=0,
	\quad i\in\mathbb Z_{\geq0},
	\ \alpha\in\mathbb Z_{\geq2}.
	\]
\el
\begin{proof}
	By Lemma~\ref{lem1}, after replacing  \(\Delta\) by
	$\Delta-D$ for a suitable $D\in\operatorname{Der}(\B)$,
	we may assume that
	$\Delta(L_{0,i})=0$ for all
	$i\in\mathbb Z_{\geq0}$. 
	
	Fix $\alpha\in\mathbb Z_{\geq2}$ and put $\beta=\alpha+1$. From Lemma \ref{lem3}, we
	may choose a finite subset $K\subseteq\mathbb Z_{\geq0}$, $c_{\alpha,k},c_{\beta,k}\in\C$, such that
	\begin{equation*}
		\Delta(L_{\alpha,0})
		=
		\sum_{k\in K}c_{\alpha,k}L_{\alpha,k},
		\qquad
		\Delta(L_{\beta,0})
		=
		\sum_{k\in K}c_{\beta,k}L_{\beta,k}.
	\end{equation*}
	Choose an integer $i\geq1$ larger than every element of $K$,  and set
	$u=L_{\alpha,0}+L_{\beta,0}+L_{0,i}$. Since
	\(\Delta(L_{0,i})=0\),
	linearity gives
	\begin{equation}\label{eq:adjacent-linear}
		\Delta(u)
		=
		\sum_{k\in K}c_{\alpha,k}L_{\alpha,k}
		+
		\sum_{k\in K}c_{\beta,k}L_{\beta,k}.
	\end{equation}
By locality and Lemma~\ref{lem-der}, there exist
	\[
	v=\sum_{(\gamma,k)\in S}b_{\gamma,k}L_{\gamma,k}\in \B,
	\qquad d\in\mathbb C,
	\]
such that
	\[
	\Delta(u)=[v,u]+dD_0(u),
	\] 
	where $S\subseteq\mathbb Z\times\mathbb Z_{\geq0}$ is finite and $b_{\gamma,k}\in\C$. Using the defining bracket of $\B$, we have 
	\begin{equation}\begin{aligned}
		\Delta(u)
		&=
		\sum_{(\gamma,k)\in S}
		\bigl((k+1)\alpha-\gamma\bigr)
		b_{\gamma,k}L_{\gamma+\alpha,k}
		+
		\sum_{(\gamma,k)\in S}
		\bigl((k+1)\beta-\gamma\bigr)
		b_{\gamma,k}L_{\gamma+\beta,k}
		\\
		&\quad-
		\sum_{(\gamma,k)\in S}
		(i+1)\gamma b_{\gamma,k}L_{\gamma,k+i}
		+
		diL_{0,i}.
		\label{eq:adjacent-local}
	\end{aligned}
	\end{equation}
Comparing the coefficients of
\(L_{\alpha,k}\),
\(L_{\beta,k}\), and \(L_{1,k+i}\) in
\eqref{eq:adjacent-linear} and \eqref{eq:adjacent-local} gives

\begin{align}
	c_{\alpha,k}&=(k+1)\alpha b_{0,k}+((k+1)(\alpha+1)+1)b_{-1,k},\label{3.17'}\\
	c_{\beta,k}
	&=
	(k+1)\beta b_{0,k}
	+
	\bigl((k+1)\alpha-1\bigr)b_{1,k},\label{3.18}\\
	0&=\bigl((k+i+2)\alpha-1\bigr)b_{1-\alpha,k+i}+
	\bigl((k+i+2)\beta-1\bigr)b_{1-\beta,k+i}
	-(i+1)b_{1,k}.\label{3.17}
\end{align}
	
We first show that
\begin{equation}\label{3.19}
b_{\gamma,k}=0
\qquad\text{whenever }\gamma<0.
\end{equation}
Suppose that $b_{\gamma,k}\neq0$ for some $\gamma<0$. Since
$k+i$ is larger than every element of $K$, the basis vector $L_{\gamma,k+i}$ does not occur in
\eqref{eq:adjacent-linear}. Comparing its coefficient in
\eqref{eq:adjacent-linear} and \eqref{eq:adjacent-local}, we obtain
\[
0=
\bigl((k+i+2)\alpha-\gamma\bigr)b_{\gamma-\alpha,k+i}
+
\bigl((k+i+2)\beta-\gamma\bigr)b_{\gamma-\beta,k+i}
-
(i+1)\gamma b_{\gamma,k}.
\]
Since $(k+i+2)\alpha-\gamma\neq0$, $(k+i+2)\beta-\gamma\neq0$ and $-(i+1)\gamma\neq0$, then at least one of 
$b_{\gamma-\alpha,k+i}$ and
$b_{\gamma-\beta,k+i}$ is nonzero. Repeating this argument produces
infinitely many nonzero coefficients of $v$, with strictly increasing
second indices, contradicting the finiteness of $v$. This proves (\ref{3.19}).

Hence
$b_{-1,k}=b_{1-\alpha,k+i}=b_{1-\beta,k+i}=0$ in (\ref{3.17'}) and (\ref{3.17}). It follows  that
$b_{1,k}=0$ and $c_{\alpha,k}=(k+1)\alpha b_{0,k}$.  Substituting this into (\ref{3.18}), we obtain
	\(
	c_{\beta,k}=(k+1)\beta b_{0,k},
	\)
	and hence
	\begin{equation}\label{eq:adjacent-ratio}
		c_{\alpha,k}
		=
		\frac{\alpha}{\beta}c_{\beta,k}.
	\end{equation}
Let $c_k=\frac12c_{2,k}$. Since $\beta=\alpha+1$,
it follows by induction that
	\(
	c_{\alpha,k}
	=\alpha
	c_{k}
	\)
for all \(\alpha\in\mathbb Z_{\geq2}\). Therefore,
	\begin{equation}
		\Delta(L_{\alpha,0})
		=
		\alpha\sum_{k\in K}c_kL_{\alpha,k}.
	\end{equation}
Define
\[
D'
=
\sum_{k\in K}
\frac{c_k}{k+1}
\operatorname{ad}L_{0,k}.
\]
Then $D'\in\operatorname{Der}(\B)$ and  $\Delta-D'\in\operatorname{LDer}(\B)$. Using the defining bracket of $\B$, we have
\[
(\Delta-D')(L_{0,i})=0,
\quad i\in\mathbb Z_{\geq0} ,
\qquad
(\Delta-D')(L_{\alpha,0})=0
\quad \alpha\in\mathbb Z_{\geq2} .
\]	
The proof is complete.
\end{proof}

\bl{lem5}
	Let $\Delta$ be a local derivation of $\B$. Replacing
	$\Delta$ by $\Delta-D$ for a suitable
	$D\in\operatorname{Der}(\B)$ if necessary, we may assume that
	\[
	\Delta(L_{0,i})=\Delta(L_{\alpha,0})=0,
	\qquad
	i\in\mathbb Z_{\ge0},\ 
	\alpha\in\mathbb Z_{\ge1}.
	\]
\el

\begin{proof}
	By Lemmas~\ref{lem1} and \ref{lem4}, after replacing
	\(\Delta\)
	by
	\(\Delta-D\)
	for a suitable derivation if necessary, we may assume
	\[
	\Delta(L_{0,i})=\Delta(L_{\alpha,0})=0,
	\qquad
	i\in\mathbb Z_{\ge0},\ 
	\alpha\in\mathbb Z_{\ge2}.
	\]
	It remains to prove that
	\[
	\Delta(L_{1,0})=0.
	\]

	By Lemma~\ref{lem3}, we suppose 
	\begin{equation}\label{eq:Delta-L10}
		\Delta(L_{1,0})
		=
		\sum_{k=p}^{q}c_{1,k}L_{1,k}
	\end{equation}
	for some integers \(0\leq p\leq q\) and $c_{1,k}\in\C$. Choose an integer \(\alpha\geq2\) such that $\alpha-1>q+1$, and set
	$u=L_{1,0}+L_{\alpha,0}$.
	Then
	\begin{equation}\label{eq:Delta-u-L10}
		\Delta(u)
		=
		\Delta(L_{1,0})
		=
		\sum_{k=p}^{q}c_{1,k}L_{1,k}.
	\end{equation}
	By locality and the description of \(\operatorname{Der}(\B)\), there exist
	\[
	v=\sum_{(\gamma,k)\in S}b_{\gamma,k}L_{\gamma,k}\in \B,
	\qquad d\in\mathbb C,
	\]
	such that
	\[
	\Delta(u)=[v,u]+dD_0(u),
	\] 
	where \(S\subseteq\mathbb Z\times\mathbb Z_{\geq0}\) is finite and $b_{\gamma,k}\in\C$.
 Using the defining bracket of \(\B\), we obtain
	\begin{equation}\label{eq:Delta-u-local-L10}
		\begin{aligned}
			\Delta(u)
			=
			\sum_{(\gamma,k)\in S}
			\bigl(k+1-\gamma\bigr)b_{\gamma,k}L_{\gamma+1,k}
			+
			\sum_{(\gamma,k)\in S}
			\bigl((k+1)\alpha-\gamma\bigr)
			b_{\gamma,k}L_{\gamma+\alpha,k}.
		\end{aligned}
	\end{equation}
	
	\medskip
	\noindent
	\textbf{Claim.}
	If \(\gamma<0\), then
	$b_{\gamma,k}=0$ for every $k\in\mathbb Z_{\geq0}$.
	
	Suppose that \(b_{\gamma,k}\neq0\) for some \(\gamma<0\).  Comparing the coefficient
	of \(L_{\gamma+s(1-\alpha)+1,k}\) in
	\eqref{eq:Delta-u-L10} and \eqref{eq:Delta-u-local-L10} gives
\[
\begin{aligned}
	0 &= (k+1-(\gamma+s(1-\alpha)))b_{\gamma+s(1-\alpha), k}
	 + ((k+1)\alpha-(\gamma+(s+1)(1-\alpha)))b_{\gamma+(s+1)(1-\alpha), k}.
\end{aligned}
\]
Since $\gamma < 0$, we have $k+1-(\gamma+s(1-\alpha))\neq0$ and $(k+1)\alpha-(\gamma+(s+1)(1-\alpha))\neq0$. Hence,	
	\[
	b_{\gamma+s(1-\alpha),k}\neq0
	\quad\Longrightarrow\quad
	b_{\gamma_{s+1}\gamma+(s+1)(1-\alpha),k}\neq0.
	\]
	Starting from \(b_{\gamma,k}\neq0\), induction yields
	\(b_{\gamma+s(1-\alpha),k}\neq0\) for every \(s\geq0\), which
	contradicts the finiteness of \(v\). This proves the claim.
	
	Now fix \(k\in\{p,\ldots,q\}\). Comparing the coefficient of \(L_{1,k}\)
	in \eqref{eq:Delta-u-L10} and \eqref{eq:Delta-u-local-L10}, we obtain
	\begin{equation}
		c_{1,k}
		=
		(k+1)b_{0,k}
		+
		\bigl((k+2)\alpha-1\bigr)b_{1-\alpha,k}.
	\end{equation}
	Since \(1-\alpha<0\), the claim gives \(b_{1-\alpha,k}=0\). Therefore
	\begin{equation}\label{eq:c1k-a0k}
	c_{1,k}=(k+1)b_{0,k}.
	\end{equation}
	
For \(s\geq0\), compare the
	coefficient of
$L_{(s+1)\alpha-s,k}$ in \eqref{eq:Delta-u-L10} and
	\eqref{eq:Delta-u-local-L10}. Since
	\((s+1)\alpha-s\geq\alpha>1\), this basis vector does not occur in \eqref{eq:Delta-u-L10}. Hence
	\begin{equation}\label{eq:positive-propagation-L10}
		\begin{aligned}
			0=
			\bigl(k+1-(s+1)(\alpha-1)\bigr)
			b_{(s+1)(\alpha-1),k}
			+
			\bigl((k+1)\alpha-s(\alpha-1)\bigr)
			b_{s(\alpha-1),k}.
		\end{aligned}
	\end{equation}
	
	The first scalar coefficient is nonzero because
	\[
	k+1-(s+1)(\alpha-1)
	\leq q+1-(\alpha-1)<0.
	\]
	The second coefficient is also nonzero. Indeed, if
$(k+1)\alpha=s(\alpha-1)$, then \(\alpha-1|(k+1)\alpha\). Since
	\(\gcd(\alpha-1,\alpha)=1\), it follows that
	\(\alpha-1\mid k+1\), contradicting
	\[
	0<k+1\leq q+1<\alpha-1.
	\]
	
	Thus \eqref{eq:positive-propagation-L10} implies
	\[
	b_{s(\alpha-1),k}\neq0
	\quad\Longrightarrow\quad
	b_{(s+1)(\alpha-1),k}\neq0.
	\]
	If \(b_{0,k}\neq0\), induction would produce infinitely many nonzero
	coefficients \(b_{s(\alpha-1),k}\), contradicting the finiteness of \(v\).
	Hence \(b_{0,k}=0\). By (\ref{eq:c1k-a0k}), we obtain $	c_{1,k}=0$ for all $p\leq k\leq q$.
	This completes the proof.
\end{proof}

\bl{lem6}
Let $\Delta$ be a local derivation of $\B$. Replacing
$\Delta$ by $\Delta-D$ for a suitable
$D\in\operatorname{Der}(\B)$ if necessary, we may assume that
\[
\Delta(L_{0,i})=\Delta(L_{\alpha,0})=0,
\qquad i\in\mathbb Z_{\ge0},\
\alpha\in\mathbb Z.
\]
\el

\begin{proof}
	By Lemma~\ref{lem5}, after subtracting a
	suitable derivation, we may assume that
	\[
	\Delta(L_{0,i})=\Delta(L_{\alpha,0})=0,
	\quad i\in\mathbb Z_{\geq0},
	\ \alpha\in\mathbb Z_{\geq1}.
	\]
	
	Fix $\alpha\in\mathbb Z_{\leq-1}$. By Lemma~\ref{lem3}, there exist integers $0\leq p\leq q$ and $c_{\alpha,k}\in\mathbb C$ such that
	\[
	\Delta(L_{\alpha,0})
	=
	\sum_{k=p}^{q}c_{\alpha,k}L_{\alpha,k}.
	\]

Choose a prime $\ell>q+1-\alpha$ and set
$\beta=\ell+\alpha$. Then $\beta>q+1$, so
$\Delta(L_{\beta,0})=0$. Let
 $u=L_{\alpha,0}+L_{\beta,0}$.
		Then
		\begin{equation}\label{eq:negative-horizontal-linear}
			\Delta(u)
			=
			\Delta(L_{\alpha,0})
			=
			\sum_{k=p}^{q}c_{\alpha,k}L_{\alpha,k}.
		\end{equation}
By locality and Lemma~\ref{lem-der}, there exist
$v\in \B$ and $d\in\mathbb C$, such that
$
\Delta(u)=[v,u]+dD_0(u).
$
Write
\[
v=\sum_{(\gamma,k)\in S}b_{\gamma,k}L_{\gamma,k},
\]
where $S\subseteq\mathbb Z\times\mathbb Z_{\geq0}$ is finite and $b_{\gamma,k}\in\C$.  This together with 	\eqref{eq:negative-horizontal-linear} gives		
		\begin{equation}\label{eq:negative-horizontal-local}
			\begin{aligned}
				\sum_{k=p}^{q}c_{\alpha,k}L_{\alpha,k}
				=
				\sum_{(\gamma,k)\in S}
				\bigl((k+1)\alpha-\gamma\bigr)
				b_{\gamma,k}L_{\gamma+\alpha,k}
				+
				\sum_{(\gamma,k)\in S}
				\bigl((k+1)\beta-\gamma\bigr)
				b_{\gamma,k}L_{\gamma+\beta,k}.
			\end{aligned}
		\end{equation}
		
		Fix $k\in\{p,\ldots,q\}$. Comparing the coefficient of $L_{\alpha,k}$ on both sides of
		\eqref{eq:negative-horizontal-local}, we obtain
		\begin{equation}\label{eq:negative-horizontal-main}
			c_{\alpha,k}
			=
			(k+1)\alpha\,b_{0,k}
			+
			\bigl((k+2)\beta-\alpha\bigr)b_{-\ell,k}.
		\end{equation}
 We show that both coefficients
\(b_{0,k}\) and \(b_{-\ell,k}\) vanish.	
		
		\medskip
		\noindent
		\textbf{Claim 1.}
		One has $b_{0,k}=0$.
		
		Suppose, to the contrary, that $b_{0,k}\neq0$. For every $s\geq1$,
		comparing the coefficient of $L_{\alpha+s\ell,k}$ on both sides of \eqref{eq:negative-horizontal-local} gives
		\begin{equation}\label{eq:positive-P-chain}
			0
			=
			\bigl((k+1)\alpha-s\ell\bigr)b_{s\ell,k}
			+
			\bigl((k+1)\beta-(s-1)\ell\bigr)b_{(s-1)\ell,k}.
		\end{equation}
		
		The first scalar coefficient is negative and hence nonzero. The second is
		also nonzero. Indeed, if
		\[
		(k+1)\beta=(s-1)\ell,
		\]
		then $\ell\mid(k+1)\beta$. Since $\ell$ is prime and $0<\beta<\ell$, we have
		$\gcd(\ell,\beta)=1$, and therefore $\ell\mid k+1$. This is impossible because
		\[
		1\leq k+1\leq q+1<\ell.
		\]
		It follows from \eqref{eq:positive-P-chain} that
		\[
		b_{(s-1)\ell,k}\neq0
		\quad\Longrightarrow\quad
		b_{s\ell,k}\neq0.
		\]
		Starting with $b_{0,k}\neq0$, induction gives \(b_{s\ell,k}\neq0\) for every \(s\geq1\),
		contradicting the finiteness of $v$. Claim~1 is proved.

\medskip
\noindent
\textbf{Claim 2.}
One has \(b_{-\ell,k}=0\).

Suppose, to the contrary, that \(b_{-\ell,k}\neq0\). For \(s\geq1\),
comparing the coefficient of \(L_{\alpha-s\ell,k}\) on both sides of \eqref{eq:negative-horizontal-local}, we obtain
\begin{equation}\label{eq:negative-ell-chain}
	0
	=
	\bigl((k+1)\alpha+s\ell\bigr)b_{-s\ell,k}
	+
	\bigl((k+1)\beta+(s+1)\ell\bigr)
	b_{-(s+1)\ell,k}.
\end{equation}

The second factor in \eqref{eq:negative-ell-chain} is positive and hence
nonzero. The first factor is also nonzero. Otherwise,
\[
s\ell=-(k+1)\alpha.
\]
Since \(0<-\alpha<\ell\) and \(\ell\) is prime, we have
\(\gcd(\ell,-\alpha)=1\). Thus \(\ell\mid k+1\), contradicting
\(1\leq k+1\leq q+1<\ell\).

Consequently,
\[
b_{-s\ell,k}\neq0
\quad\Longrightarrow\quad
b_{-(s+1)\ell,k}\neq0.
\]
Starting with \(b_{-\ell,k}\neq0\), induction yields
\(b_{-s\ell,k}\neq0\) for every \(s\geq1\). Again, this contradicts the
finiteness of \(v\). Therefore \(b_{-\ell,k}=0\), proving Claim~2.

Substituting Claims~1 and~2 into \eqref{eq:negative-horizontal-main}, we obtain
\(c_{\alpha,k}=0\) for every \(p\leq k\leq q\). Hence
\(\Delta(L_{\alpha,0})=0\), and the proof is complete.
	\end{proof}

After the preceding normalizations, we may assume that
\(\Delta(L_{0,i})=0\) for all \(i\in\mathbb Z_{\geq0}\) and
\(\Delta(L_{\alpha,0})=0\) for all \(\alpha\in\mathbb Z\).
We now determine the action of \(\Delta\) on the remaining elements
\(L_{\alpha,i}\). Lemma \ref{lem7} restricts the possible terms occurring in
\(\Delta(L_{\alpha,i})\), Lemma \ref{lem8} eliminates all off-diagonal terms,
and Lemma \ref{lem9} shows that the remaining diagonal coefficients vanish as
well.

\bl{lem7}
Let $\Delta$ be a local derivation of $\B$ satisfying $\Delta(L_{0, i})=\Delta(L_{\alpha,0})=0$ for all $\alpha\in \Z$, $i\in \Z_{\geq0}$. Then, for every $\alpha,i\in\mathbb Z_{\geq1}$, there exist a finite subset
$K\subseteq\mathbb Z_{\geq2}$ and coefficients
$c_{\alpha,i},c_{s\alpha,\,s(i+1)-2}\in\mathbb C$, such that
\[
\Delta(L_{\alpha,i})
=
c_{\alpha,i}L_{\alpha,i}
+
\sum_{s\in K}
c_{s\alpha,\,s(i+1)-2}L_{s\alpha,\,s(i+1)-2}.
\]
\el
\begin{proof}
Fix \(\alpha,i\in\mathbb Z_{\geq1}\). Write
	\begin{equation}\label{lem-3.9-1}
	\Delta(L_{\alpha,i})
	=
	\sum_{\mu=a}^{b}\sum_{r=m}^{n}c_{\mu,r}L_{\mu,r},
	\end{equation}
	where integers \(a\leq b\), \(0\leq m\leq n\), and $c_{\mu,r}\in\C$. Put
	\(
	M=\max\{|a|,|b|\},
	\)
	and choose an integer $\beta$ such that
	\(
	\beta>2M+(n+2)\alpha.
	\)
	Set
	\(
	u=L_{\alpha,i}+L_{\beta,0}.
	\)
	Since \(\Delta(L_{\beta,0})=0\), we have
	$
	\Delta(u)=\Delta(L_{\alpha,i}).
	$
By locality and Lemma {\ref{lem-der}}, there exist
\[
v=\sum_{(\gamma,k)\in S}b_{\gamma,k}L_{\gamma,k}\in \B,
\qquad d\in\mathbb C,
\]
such that
\[
\Delta(L_{\alpha,i})
=[v,L_{\alpha,i}]+[v,L_{\beta,0}]
+dD_0(L_{\alpha,i}),
\]
	where \(S\subset\mathbb Z\times\mathbb Z_{\geq0}\) is finite and $b_{\gamma,k}\in\C$.
	Using the defining bracket of $\B$, we obtain
	\begin{equation}\label{lem-3.9-2}
	\begin{aligned}
		\Delta(L_{\alpha,i})
		&=
		\sum_{(\gamma,k)\in S}
		\bigl((k+1)\alpha-(i+1)\gamma\bigr)
		b_{\gamma,k}L_{\gamma+\alpha,k+i}        \\
		&\quad+
		\sum_{(\gamma,k)\in S}
		\bigl((k+1)\beta-\gamma\bigr)
		b_{\gamma,k}L_{\gamma+\beta,k}
		+
		diL_{\alpha,i}.
	\end{aligned}
	\end{equation}

Let \(L_{\mu,r}\) be a non-diagonal term of
\(\Delta(L_{\alpha,i})\); that is,
\[
c_{\mu,r}\neq0,
\qquad
(\mu,r)\neq(\alpha,i).
\]

\medskip
\noindent
\textbf{Claim 1.}
Every non-diagonal term of \(\Delta(L_{\alpha,i})\) comes from the first sum
in \((\ref{lem-3.9-2})\).

Suppose that \(L_{\mu,r}\) , \((\mu,r)\neq(\alpha,i)\), arises from the second sum. Then
\[
b_{\mu-\beta,r}\neq0.
\]
For \(t\geq0\), set
\[
\gamma_t=\mu+t\alpha-(t+1)\beta,
\qquad
k_t=r+ti.
\]
We prove inductively that
\[
b_{\gamma_t,k_t}\neq0,
\qquad t\geq0.
\]
The assertion holds for \(t=0\). Assume that
\(b_{\gamma_t,k_t}\neq0\). Since
\[
\gamma_t+\alpha
=
\mu+(t+1)(\alpha-\beta)<a,
\]
the basis element
\(L_{\gamma_t+\alpha,k_t+i}\) does not occur in
(\ref{lem-3.9-1}). Comparing its coefficient in
(\ref{lem-3.9-1}) and (\ref{lem-3.9-2}), we obtain
\[
0=
\bigl((k_t+1)\alpha-(i+1)\gamma_t\bigr)b_{\gamma_t,k_t}
+
\bigl((k_{t+1}+1)\beta-\gamma_{t+1}\bigr)
b_{\gamma_{t+1},k_{t+1}}.
\]
Moreover,
\[
(k_t+1)\alpha-(i+1)\gamma_t
=
(i+1)(t+1)\beta+(r+1-t)\alpha-(i+1)\mu
\neq0.
\]
Hence \(b_{\gamma_{t+1},k_{t+1}}\neq0\). This produces infinitely
many nonzero coefficients of \(v\), a contradiction. Therefore Claim~1
holds.

\medskip
\noindent
\textbf{Claim 2.}
Let \(L_{\mu,r}\) be a non-diagonal term of
\(\Delta(L_{\alpha,i})\), and put 
$
\gamma=\mu-\alpha$, $k=r-i$.
Then \(k\geq0\), \(b_{\gamma,k}\neq0\), and there exists
\(t\in\mathbb Z_{\geq0}\) such that
$
k-ti\geq0
$
and
\begin{equation}\label{lem-3.9-3}
	(k-ti+1)\beta=\gamma+t(\beta-\alpha).
\end{equation}

By Claim~1, \(L_{\mu,r}\) comes from the first sum in (\ref{lem-3.9-2}).  Hence
\(\mu=\gamma+\alpha\), \(r=k+i\), and
\(b_{\gamma,k}\neq0\) for some \(\gamma,k\).  
For \(t\geq0\), set
\[
\gamma_t=\gamma+t(\beta-\alpha),
\qquad
k_t=k-ti.
\]
Suppose that \(k_t\geq0\) and \(b_{\gamma_t,k_t}\neq0\). Since
\[
\gamma_t+\beta
=
\mu+(t+1)(\beta-\alpha)>b,
\]
the element \(L_{\gamma_t+\beta,k_t}\) does not occur in
(\ref{lem-3.9-1}). Comparing its coefficient gives
\[
0=
\bigl((k_{t+1}+1)\alpha-(i+1)\gamma_{t+1}\bigr)
b_{\gamma_{t+1},k_{t+1}}
+
\bigl((k_t+1)\beta-\gamma_t\bigr)b_{\gamma_t,k_t},
\]
where the first term is omitted when \(k_{t+1}<0\).

By the choice of \(\beta\),
\[
(k_{t+1}+1)\alpha-(i+1)\gamma_{t+1}\neq0.
\]
Thus, whenever
\[
(k_t+1)\beta-\gamma_t\neq0
\quad\text{and}\quad
k_{t+1}\geq0,
\]
we obtain \(b_{\gamma_{t+1},k_{t+1}}\neq0\).
Since the second indices \(k_t\) decrease by \(i\), this process cannot
continue indefinitely. Therefore, for some \(t\geq0\) with \(k_t\geq0\),
we must have
\[
(k_t+1)\beta-\gamma_t=0,
\]
which is precisely \eqref{lem-3.9-3}. This proves Claim~2.

\medskip
\noindent
\textbf{Claim 3.}
Every non-diagonal term of \(\Delta(L_{\alpha,i})\) has the form
\(
L_{s\alpha,\,s(i+1)-2}
\)
for some \(s\in\mathbb Z_{\geq2}\).

Let \(L_{\mu,r}\) be a non-diagonal term. By Claim~2, there exists
\(t\geq0\) such that
\[
(k-ti+1)\beta
=
\gamma+t(\beta-\alpha),
\]
where \(\gamma=\mu-\alpha\) and \(k=r-i\). Equivalently,
\begin{equation}
	\bigl(k+1-t(i+1)\bigr)\beta
	=
	\mu-(t+1)\alpha.
	\label{lem3.7-index}
\end{equation}
Since \(k-ti\geq0\), we have \(t\leq k\leq n\). Hence
\[
\bigl|\mu-(t+1)\alpha\bigr|
\leq M+(n+1)\alpha
<\beta.
\]
The left-hand side of \eqref{lem3.7-index} is an integral multiple of
\(\beta\). Therefore
\[
k+1=t(i+1),
\qquad
\mu=(t+1)\alpha.
\]
Moreover,
\[
r=k+i
=t(i+1)+i-1
=(t+1)(i+1)-2.
\]
Since \(k\geq0\), the equality \(k+1=t(i+1)\) implies \(t\geq1\).
Setting \(s=t+1\), we obtain
\[
\mu=s\alpha,
\qquad
r=s(i+1)-2,
\qquad
s\geq2.
\]
This proves Claim~3.

By Claims1-3, the lemma is proved.
\end{proof}


\bl{lem8}
Let \(\Delta\) be a local derivation of \(B\) satisfying $\Delta(L_{0, i})=\Delta(L_{\alpha,0})=0$ for all $\alpha\in\Z, i\in \Z_{\geq0}$. Then, for every $\alpha\in \Z$ and $i\in \Z_{\geq0}$, there exist coefficients $c_{\alpha, i}\in\C$ such that
\[
\Delta(L_{\alpha, i})=c_{\alpha, i}L_{\alpha, i}.
\]
\el
\begin{proof}
	The assertion is immediate if \(\alpha=0\) or \(i=0\), by the
	assumptions. We first assume that
	\(\alpha,i\in\mathbb Z_{\geq1}\).
	
	By Lemma~\ref{lem7}, we may assume
	\begin{equation}\label{eq:index-restricted-form}
		\Delta(L_{\alpha,i})
		=
		c_{\alpha,i}L_{\alpha,i}
		+
		\sum_{s\in K}
		c_{s\alpha,\,s(i+1)-2}L_{s\alpha,\,s(i+1)-2},
	\end{equation}
	where \(K\subseteq\mathbb Z_{\geq2}\) is finite and 
	\(c_{\alpha,i},c_{s\alpha,\,s(i+1)-2}\in\mathbb C\).
	By locality and Lemma~\ref{lem-der}, there exist
	$
	v=\sum_{(\gamma,k)\in S}
	b_{\gamma,k}L_{\gamma,k}\in \B$ and $d\in\mathbb C$,
	 such that
	\[
	\Delta(L_{\alpha,i})
	=
	[v,L_{\alpha,i}]
	+
	dD_0(L_{\alpha,i}),
	\] 
	where \(S\subseteq\mathbb Z\times\mathbb Z_{\geq0}\) is finite and $b_{\gamma,k}\in\C$.
	Using the defining bracket of \(\B\), we obtain
	\begin{equation}\label{eq:local-value-single-basis}
		\Delta(L_{\alpha,i})
		=
		\sum_{(\gamma,k)\in S}
		\bigl((k+1)\alpha-(i+1)\gamma\bigr)
		b_{\gamma,k}L_{\gamma+\alpha,k+i}
		+
		diL_{\alpha,i}.
	\end{equation}
	
	Fix \(s\in K\). Then the coefficient of
	\(L_{s\alpha,\,s(i+1)-2}\) in
	\eqref{eq:local-value-single-basis} is zero. Comparing it with
	the corresponding coefficient in
	\eqref{eq:index-restricted-form}, we obtain \(c_{s\alpha,\,s(i+1)-2}=0\).
	Therefore
		\begin{equation}\label{lem-3.9-35}
	\Delta(L_{\alpha,i})
	=
	c_{\alpha,i}L_{\alpha,i},
	\qquad
	\alpha,i\in\mathbb Z_{\geq1}.
		\end{equation}
	
	It remains to consider $\alpha\in\mathbb Z_{\leq-1}$. Put
	$\beta=-\alpha$. By Corollary \ref{coro-B}, $\widetilde{\Delta}=\sigma_0\Delta\sigma_0^{-1}$ is a local
	derivation and satisfies the same assumptions. Applying the positive case to
	$\widetilde{\Delta}(L_{\beta,i})$, and then using the relation
	$
	\widetilde{\Delta}(L_{\beta,i})
	=
	-\sigma_0\bigl(\Delta(L_{\alpha,i})\bigr),
	$
	together with
	\eqref{lem-3.9-35}, we obtain
	\[
	\Delta(L_{\alpha,i})
	=
	c_{\alpha,i}L_{\alpha,i},
	\qquad
	\alpha\in\mathbb Z_{\leq-1},
	\
	i\in\mathbb Z_{\geq1}.
	\]
	
	This completes the proof.
\end{proof}

\bl{lem9}
Let $\Delta$ be a local derivation of $\B$ satisfying $\Delta(L_{0, i})=\Delta(L_{\alpha,0})=0$ for all $\alpha\in \Z$ and $i\in \Z_{\geq 0}$. Then $\Delta=0$.
\el

\begin{proof}
	By Lemma~\ref{lem8}, for
	\(\alpha\in\mathbb Z\) and \(i\in\mathbb Z_{\geq1}\),
	there exists \(c_{\alpha,i}\in\mathbb C\) such that
	\[
	\Delta(L_{\alpha,i})
	=
	c_{\alpha,i}L_{\alpha,i}.
	\]
	It suffices to prove that
	$
	c_{\alpha,i}=0.
	$
	
	We first assume that
	\(\alpha,i\in\mathbb Z_{\geq1}\).
	Choose integers \(j,\beta\) satisfying
	$j>i$ and $\beta>\alpha$.
	Set
	\(
	u=L_{\alpha,i}+L_{0,j}+L_{\beta,0}.
	\)
	Since
	\(\Delta(L_{0,j})=\Delta(L_{\beta,0})=0\),
	we have
	\(
	\Delta(u)=c_{\alpha,i}L_{\alpha,i}.
	\)
	By locality and Lemma \ref{lem-der}, there exist
	\[
	v=\sum_{(\gamma,k)\in S}b_{\gamma,k}L_{\gamma,k}\in \B ,\quad d\in\mathbb C,\]
	where \(S\subseteq\mathbb Z\times\mathbb Z_{\geq0}\) is finite and $b_{\gamma,k}\in\C$, such that
	\(
	\Delta(u)=[v,u]+dD_0(u).
	\)
	Then we obtain
	\begin{equation}\label{3.36}
	\begin{aligned}
		c_{\alpha,i}L_{\alpha,i}
		&=
		\sum_{(\gamma,k)\in S}
		((k+1)\alpha-(i+1)\gamma)
		b_{\gamma,k}L_{\gamma+\alpha,k+i}
		-\sum_{(\gamma,k)\in S}
		(j+1)\gamma b_{\gamma,k}L_{\gamma,k+j}\\
		&\quad
		+\sum_{(\gamma,k)\in S}
		((k+1)\beta-\gamma)b_{\gamma,k}L_{\gamma+\beta,k}
		+d(iL_{\alpha,i}+jL_{0,j}).
	\end{aligned}
	\end{equation}
	
	\medskip
	\noindent
	\textbf{Claim.}
	If \(\gamma<0\), then
	\(b_{\gamma,k}=0\) for every \(k\in\mathbb Z_{\geq0}\).
	
	Suppose, to the contrary, that \(b_{\gamma,k}\neq0\) for some
	\(\gamma<0\). Comparing the coefficient of \(L_{\gamma,k+j}\) on both sides of (\ref{3.36}), we obtain
	\[
		0
		=
		\bigl((k+j+2)\alpha-(i+1)\gamma\bigr)
		b_{\gamma-\alpha,k+j-i}
		-(j+1)\gamma b_{\gamma,k}
		+
		\bigl((k+j+2)\beta-\gamma\bigr)
		b_{\gamma-\beta,k+j}.
	\]
	Since 
	\[
	(k+j+2)\alpha-(i+1)\gamma>0,\quad -(j+1)\gamma>0,
	\quad
	(k+j+2)\beta-\gamma>0,
	\]
	all three coefficients are nonzero. Consequently, $b_{\gamma-\alpha,k+j-i}\neq0$ or $b_{\gamma-\beta,k+j}\neq0$. Since $\gamma-\alpha, \gamma-\beta<0$,  repeating the same argument inductively, we obtain infinitely many
	nonzero coefficients of \(v\), a contradiction. This proves the Claim. 
	
	Comparing the coefficient of \(L_{0,j}\) on both sides of (\ref{3.36}) and using the Claim, we obtain
	$dj=0$.
	 Since \(j>0\), it follows that $d=0$. Finally, comparing the coefficients of $L_{\alpha, i}$ and $L_{\beta,0}$ on both sides of (\ref{3.36}) yields
	\[c_{\alpha,i}
	=
	\alpha b_{0,0}
	+
	\bigl((i+2)\beta-\alpha\bigr)b_{\alpha-\beta,i}
	+
	di,\quad
	\beta b_{0,0}  = 0.\]
	Since   $\alpha-\beta<0$, the claim gives
	$
	b_{\alpha-\beta,i}=0.
	$
	Hence
	\[
	c_{\alpha,i}
	=
	\alpha b_{0,0}=0.
	\]
	
	It remains to consider
	$
	\alpha\in\mathbb Z_{\le-1}.
	$
	For
	\(\alpha\le-1\),
	let
	$
	\widetilde{\Delta}
	=
	\sigma_0\Delta\sigma_0^{-1}.
	$
	By Corollary \ref{coro-B},
	\(\widetilde{\Delta}\)
	is again a local derivation satisfying the same assumptions.
	Applying the positive case to
	\(\widetilde{\Delta}(L_{-\alpha,i})\)
	and using
	\[
	\widetilde{\Delta}(L_{-\alpha,i})
	=
	-\sigma_0(\Delta(L_{\alpha,i})),
	\]
	we conclude that
	\[
	\Delta(L_{\alpha,i})=0,
	\qquad
	\alpha\in\mathbb Z_{\le-1},
	\
	i\in\mathbb Z_{\ge1}.
	\]
	Hence
	$
	\Delta=0.
	$
\end{proof}

\begin{proof}[Proof of Theorem~1.1]
	Let $\Delta\in \operatorname{LDer}(\B)$. By Lemma \ref{lem6}, after replacing
	\(\Delta\)
	by
	\(\Delta-D\)
	for a suitable
	\(D\in\operatorname{Der}(\B)\),
	we may assume that
	\[
	\Delta(L_{0,i})=\Delta(L_{\alpha,0})=0,
	\qquad
	i\in\mathbb Z_{\geq0},
	\ \alpha\in\mathbb Z.
	\]
	By Lemma \ref{lem9}, it follows that
	$
	\Delta=0.
	$
	Therefore every local derivation of \(B\) is a derivation, that is,
	\[
	\operatorname{LDer}(\B)
	=
	\operatorname{Der}(\B),
	\]
	which completes the proof of Theorem~1.1.
\end{proof}

\end{document}